\begin{document}

\title{A Fast Fourier-Galerkin Method for Solving Boundary Integral Equations on Non-axisymmetric Toroidal Surfaces}



\author{Yiying Fang  \textsuperscript{1}       \and
       Ying Jiang \textsuperscript{2,~\Letter} \and
       Jiafeng Su \textsuperscript{3}}

\institute{
\begin{itemize}
    \item[\textsuperscript{\Letter}] {Ying Jiang}\\
    \email{jiangy32@mail.sysu.edu.cn}
    \at
    \item[\textsuperscript{1}]
        School of Computer Science and Engineering, Sun Yat-sen University, Guangzhou 510006, People's Republic of China.
        \email{fangyy8@mail3.sysu.edu.cn}
    \item[\textsuperscript{2}]
    School of Computer Science and Engineering, Sun Yat-sen University, Guangzhou 510006, People's Republic of China, and Guangdong Province Key Laboratory of Computational Science, Guangzhou 510275, People's Republic of China.
    \email{jiangy32@mail.sysu.edu.cn}
    \item[\textsuperscript{3}]
    School of Computer Science and Engineering, Sun Yat-sen University, Guangzhou 510006, People's Republic of China.
    \email{sujf7@mail2.sysu.edu.cn}
\end{itemize}
}

\maketitle

\begin{abstract}
In this paper, we introduce a fast Fourier-Galerkin method for solving boundary integral equations on smooth non-axisymmetric toroidal surfaces. We analyze the properties of the integral operator's kernel to derive the decay pattern of the entries in the representation matrix. Leveraging this decay pattern, we devise a truncation strategy that efficiently compresses the dense representation matrix of the integral operator into a sparser form containing only $\mathcal{O}(N\ln^2 N)$ nonzero entries, where $N$ denotes the degrees of freedom of the discretization method. We prove that this truncation strategy achieves a quasi-optimal convergence order of $\mathcal{O}(N^{-p/2}\ln N)$, with $p$ representing the degree of regularity of the exact solution to the boundary integral equation. Additionally, we confirm that the truncation strategy preserves stability throughout the solution process. Numerical experiments validate our theoretical findings and demonstrate the effectiveness of the proposed method.
\keywords{fast Fourier–Galerkin method \and boundary integral equation \and non-axisymmetric toroidal surface}
\subclass{65M38 \and 65D32 \and 45L05}
\end{abstract}

\section{Introduction}
\label{intro}
\noindent
Boundary integral equation (BIE) methods have greatly advanced research in solving both initial and boundary value partial differential equations \cite{atkinson1996numerical,hsiao2008boundary,kress2013linear}. These methods are known for: (1) eliminating volume mesh generation; (2) producing operators with bounded condition numbers; (3) precisely meeting far-field boundary conditions in exterior problems; and (4) achieving high convergence rates with smooth domain boundaries and conditions. Due to these benefits, BIE methods are applied in various fields including analyzing the scattering of acoustic and electromagnetic waves across different media or obstacles \cite{Bao2019,Colton2013Integ,jun2019fft}, solving multi-medium elasticity \cite{Yang2015}, Stokes flows \cite{Bystricky2021}, and both steady and transient heat conduction problems \cite{jiang2020}. Furthermore, with suitable time discretization or linearization, BIE methods address complex nonlinear partial differential equations, such as fluid-structure interactions \cite{Opstal2015}, dynamic poroelasticity \cite{Zhang2021}, the Navier-Stokes equations \cite{Klinteberg2020}, and multiphase flows \cite{Wei2020}.

With advances in high-order accurate techniques \cite{cai2018fast,jiang2014fast,Kapur1997,Kolm2001}, achieving near double-precision accuracy in 2D with a moderate number of degrees of freedom has become feasible, significantly reducing computational complexity to linear or quasi-linear levels. Research continues for solving BIEs on 3D surface, notably with \cite{atkinson2012}'s development of a Spectral Galerkin method using spherical harmonics for equations on surfaces topologically equivalent to a sphere.
This method achieves convergence rates of $\mathcal{O}(N^{-\frac{p+\alpha-1/2}{2}})$, with degrees of freedom scaling as $\mathcal{O}(N)$, when the solution belongs to the weighted continuously differentiable function space $C^{p,\alpha}$ ($p\geq 0$, $\alpha\in (0,1]$ and $p+\alpha>1/2$).
The work presented in  \cite{Ying2006} utilizes Nystr\"{o}m’s method to address 3D elliptic boundary value problems on domains with smooth boundaries. It achieves an error bound $\mathcal{O}(N^{-\frac{p-1}{4}})$, where $N$ represents the number of discretization points on the boundary of the domain, and the boundary data is in $C^p$ .

Recently, numerous effective algorithms have emerged for solving BIEs on rotationally symmetric surfaces. These equations can typically be expressed as a sequence of BIEs defined on a generating curve. A high-order efficient scheme utilizing Nystr\"{o}m discretization and quadrature techniques has been developed for these equations on axisymmetric surfaces \cite{Young2012nystrom}. Specific algorithms for solving Helmholtz equations \cite{Serkh2022effic,Helsing2014explicit} and Maxwell's equations \cite{Charles2019high,jun2019fft} have significantly simplified the solution process. The computational cost for forming the corresponding linear systems of BIEs scales as $\mathcal{O}(N^{3/2}\log N)$, where $N$ represents the number of discretization points on the boundary of the domain \cite{Young2012nystrom}.

Over the past few decades, significant advancements have been made in developing fast solution algorithms for boundary integral equations. The integration of the Fast Multipole Method (FMM) with traditional discretization techniques like the Nyström and Galerkin methods has significantly enhanced the efficiency of solving two-dimensional and three-dimensional boundary integral equations \cite{1999Cheng,1997Greengard,2018Klockner}. Notable advances in higher-order numerical integration and the handling of singular integrals have led to substantial accuracy improvements. Innovations such as the hybrid Gauss-Trapezoidal quadrature rule \cite{1999Alpert} and refined techniques for evaluating layer potentials near boundaries \cite{2014Barnett,2008Helsing} have been pivotal. Additionally, the integration of the expansion-based quadrature method (QBX) with FMM technology has optimized calculations for layer potentials close to boundaries, significantly reducing computational complexity \cite{2013Kloeckner,2017Rachh}. The relationship between the number of multipole expansion terms, $q$, and the computational complexity of the fast multipole method has been precisely characterized. For instance, the computational cost of the $N \log N$ scheme in three dimensions is approximately $(27+189q^2\log_8 N)N$ (see (5.19) in \cite{Beatson1997}). By leveraging the fast multipole method, researchers have significantly reduced the computational complexity of two- and three-dimensional boundary integral equations, thereby expanding the applicability of these methods in physics and engineering.

In addition, boundary integral equation solvers defined on smooth toroidal surfaces have gained significant attention due to their effectiveness in efficiently computing Taylor states within toroidal geometries \cite{Malhotra2019,neil2018integr}. The computation of Taylor states in these geometries is crucial for the calculation of stepped pressure stellarator equilibria, and plays an important role in both the design of new non-axisymmetric magnetic confinement devices as well as the analysis of experimental results from existing ones \cite{neil2018integr}.
However, it is not straightforward for extending the algorithms \cite{Charles2019high,Serkh2022effic,Helsing2014explicit,jun2019fft,Young2012nystrom} developed for axisymmetric surfaces to boundary integral equations on non-axisymmetric surfaces. This is because, in axisymmetric geometries, the boundary of the domain can be regarded as a closed curve revolving around a particular axis, allowing for a simplification of the boundary integral equations. In non-axisymmetric geometries, such simplifications are generally not feasible. This paper aims to bridge this gap by considering a more general case and developing a fast Fourier-Galerkin algorithm to solve the boundary integral equation derived from the interior Dirichlet problem on surfaces that, while not necessarily axisymmetric, are diffeomorphic to a torus.


Fast Fourier-Galerkin methods have been widely used to solve two-dimensional BIEs on smooth boundaries \cite{cai2018fast,jiang2014fast,2021jiangfast}. The key concept behind these methods is to design truncation strategies for the representation matrices of integral operators with continuous or weakly singular kernels under the Fourier bases. By truncating a dense matrix to a sparse one while retaining essential entries, a high order of accuracy can be achieved. It is worth noting that the kernel functions employed in these methods for solving two-dimensional BIEs are typically Lebesgue square integrable functions. However, in the case considered in this paper, the kernel functions are not necessarily Lebesgue square integrable but are Lebesgue integrable, which poses additional challenges. Fortunately, after performing a shear transformation on the kernel function and fixing the sheared variables, we discovered a uniform analytical continuation radius for each of the remaining variables. Building on this observation, we can deduce the decay pattern of the entries in the matrix representing the three-dimensional boundary integral operator, enabling the design of an effective truncation strategy.

Consequently, using this truncation strategy, we propose a \emph{fast} Fourier-Galerkin method to efficiently solve boundary integral equations on non-axisymmetric toroidal surfaces. This method achieves a quasi-optimal convergence order of $\mathcal{O}(N^{-p/2}\log N)$ with a quasi-linear number of non-zero entries in the representation matrix, scaling as $\mathcal{O}(N\log^2 N)$.
Here, $p$ represents the degree of regularity of the exact solution, and $N$ denotes the degrees of freedom in this Fourier-Galerkin method. In general, for the Sobolev space
$H_p$, composed of bivariate functions with $p$-th order regularity, the projection error of these functions typically scales as $\mathcal{O}(N^{-p/2})$. It should be noted that in this article, we focus solely on the truncation strategy and do not address the numerical integration required for computing the matrix elements, which will be the subject of our future work.
	
The paper is organized as follows: In Section \ref{sec:2}, we introduce the fast Fourier-Galerkin method along with a truncation strategy for compressing the representation matrix of the integral operator. In Section \ref{sec:3}, we demonstrate that after a shear transformation of the kernel function and fixation of the sheared variables, a uniform analytical continuation radius exists for the remaining two variables. This enables us to deduce the decay pattern of the coefficients in the representation matrix. In Section \ref{sec:4}, we analyze the stability and convergence order of the fast Fourier-Galerkin method. Section \ref{sec:5} demonstrates the application of our method to solve boundary integral equations on non-axisymmetric surfaces through numerical experiments, confirming the method's high accuracy and efficiency. Finally, Section \ref{sec:6} concludes the paper with a discussion of the findings and potential future research directions.

\section{A fast Fourier–Galerkin method}
\label{sec:2}
In this section, we introduce a fast Fourier-Galerkin method for solving BIEs that derived from the interior Dirichlet problem on a multiply connected bounded open region $\Omega\subset \mathbb{R}^3$ with a smooth boundary $\partial \Omega$. Specifically, we consider cases where $\partial \Omega$ is a $C^\infty$ toroidal surface which is not necessarily axisymmetric. This mapping implies an infinitely differentiable bijection from the torus to $\partial \Omega$, which also possesses a differentiable inverse.

The interior Dirichlet problem involves finding a function $u$ satisfying
\begin{equation*}\label{Diric}
	\begin{aligned}
		& \Delta u(\mathbf{x})=0,\quad && \mathbf{x}\in \Omega,\\
		& u(\mathbf{x})=h(\mathbf{x}),\quad &&\mathbf{x}\in\partial \Omega,
	\end{aligned}
\end{equation*}
where $h$ is a given sufficiently smooth function on $\partial \Omega$.
For a vector $\mathbf{a}=[a_k:k\in\mathbb{Z}_3]\in\mathbb{C}^3$, its 2-norm is defined as $\|\mathbf{a}\|_2:=\sqrt{\sum_{j\in\mathbb{Z}_3}|a_j|^2}$.
It is well known that the solution $u$ can be represented through a double layer potential as established in Theorem 6.22 of \cite{kress2013linear}
\begin{equation*}
	u(\mathbf{x})=\int_{\partial \Omega}\varrho(\mathbf{y})\frac{\partial}{\partial \boldsymbol{\nu}_{\mathbf{y}}}\left(\frac{1}{\|\mathbf{x}-\mathbf{y}\|_2}\right){d}s(\mathbf{y}),\quad \mathbf{x}\in \Omega,
\end{equation*}
where $\frac{\partial}{\partial \boldsymbol{\nu}_{\mathbf{y}}}$ denotes the normal derivative in the direction of $\boldsymbol{\nu}_{\mathbf{y}}$, the outward unit normal vector at $\mathbf{y}\in\partial \Omega$, and ${d}s(\mathbf{y})$ represents the surface element at  $\mathbf{y}\in\partial \Omega$. Here, $\varrho$ represents an unknown double layer density function determined by solving the boundary integral equation
\begin{equation}\label{bie1}
	\varrho(\mathbf{x})-\frac{1}{2\pi}\int_{\partial \Omega}\varrho(\mathbf{y})\frac{\partial}{\partial \boldsymbol{\nu}_{\mathbf{y}}}\left(\frac{1}{\|\mathbf{x}-\mathbf{y}\|_2}\right)ds(\mathbf{y})=-\frac{1}{2\pi}h(\mathbf{x}),\quad \mathbf{x}\in\partial \Omega.
\end{equation}
This equation is known to have a weakly singular kernel and is uniquely solvable, as detailed in section 9.1.4 of \cite{atkinson1996numerical} and by Theorem 6.23 in \cite{kress2013linear}.
	
To propose the \emph{fast} Fourier-Galerkin method for solving \eqref{bie1}, we require a parametrization for the boundary $\partial \Omega$.
Define $\mathbb{N}$ as the set of positive integers. For any given set $\mathbb{A}$ and $j\in\mathbb{N}$, the $j$-fold tensor product of $\mathbb{A}$ is denoted by $\mathbb{A}^j:=\mathbb{A}\otimes\mathbb{A}\otimes\cdots\otimes\mathbb{A}$.
For a vector $\mathbf{v}\in\mathbb{C}^n$, with $n \in \mathbb{N}$, its transpose is denoted by $\mathbf{v}^T$.
Suppose $\partial \Omega$ is characterized by the differentiable parametric equation $\Gamma(\boldsymbol{\theta}):=\left[\gamma_0(\boldsymbol{\theta}),\gamma_1(\boldsymbol{\theta}),\gamma_2(\boldsymbol{\theta})\right] {^T}$, $\boldsymbol{\theta}\in I^2_{2\pi}$, where $I_{2\pi}:=[0,2\pi)$, and $\gamma_j$ is $2\pi$-biperiodic on $\mathbb{R}^2$. Additionally, let $L_2(I_{2\pi}^2)$ denote the standard Hilbert space of square-integrable functions over $I_{2\pi}^2$ with norm $\|\cdot\|$. For $n\in\mathbb{N}$, define $\mathbb{Z}_{n}:=\{0,1,\cdots,n-1\}$.
For all $\boldsymbol{\theta}:=[\theta_0,\theta_1]\in I_{2\pi}^2$, let $d\boldsymbol{\theta}:=d\theta_0d\theta_1$ and $\frac{\partial\Gamma}{\partial \theta_{\iota}}:=\left[\frac{\partial\gamma_0}{\partial\theta_{\iota}}, \frac{\partial\gamma_1}{\partial\theta_{\iota}}, \frac{\partial\gamma_2}{\partial\theta_{\iota}}\right]^T$, where $\iota\in\mathbb{Z}_2$.
For vectors $\mathbf{a}:=[a_k:k\in\mathbb{Z}_3]^T$ and $\mathbf{b}:=[b_k:k\in\mathbb{Z}_3]^T$ in $\mathbb{C}^3$, denote their dot product as $\mathbf{a}\cdot\mathbf{b}:=\sum_{j\in\mathbb{Z}_3}a_jb_j$, and their cross product as  $\mathbf{a}\times\mathbf{b}:=[a_1b_2-a_2b_1,a_2b_0-a_0b_2,a_0b_1-a_1b_0]^T$.
By substituting the parametric equation $\Gamma$ into \eqref{bie1}, we reformulate the boundary integral equation as follows,
\begin{equation}\label{orig_bie}
	(\mathcal{I}-\mathcal{K})\rho=g,
\end{equation}
where $g(\boldsymbol{\theta}):=-\frac{1}{2\pi}h(\Gamma(\boldsymbol{\theta}))$, $\mathcal{I}$ is the identity operator, and $\mathcal{K}$ is the integral operator defined by
\begin{equation*}
	(\mathcal{K}\varphi)(\boldsymbol{\theta}):=\int_{I^2_{2\pi}} K(\boldsymbol{\theta}, \boldsymbol{\eta})\varphi( \boldsymbol{\eta})d \boldsymbol{\eta},\quad {\rm for~} \varphi\in L_2(I_{2\pi}^2) {~\rm and~} \boldsymbol{\theta}\in I_{2\pi}^2.
\end{equation*}
Here, the kernel function $K$ is specified as
\begin{equation}\label{kernel_K_specific}
	K(\boldsymbol{\theta}, \boldsymbol{\eta}):=-\frac{1}{2\pi}\frac{(\Gamma( \boldsymbol{\theta})-\Gamma( \boldsymbol{\eta}))\cdot \left(\frac{\partial\Gamma}{\partial \eta_0}\times\frac{\partial\Gamma}{\partial \eta_1}\right)(\boldsymbol{\eta})}{\| \Gamma( \boldsymbol{\theta})-\Gamma( \boldsymbol{\eta}) \|_2^3}.
\end{equation}

To guarantee the existence and uniqueness of the solution for \eqref{orig_bie}, the following assumption about the parametric equation $\Gamma$ is needed

\noindent \textbf{(A1)} there exists a constant $C_0>0$ such that for all $\boldsymbol{\theta}, \boldsymbol{\eta}\in I_{2\pi}^2$,
$$
\|\Gamma(\boldsymbol{\theta})-\Gamma(\boldsymbol{\eta})\|_2\geq C_0  \zeta(\boldsymbol{\theta}-\boldsymbol{\eta}),
$$
where $\zeta(\boldsymbol{\theta}):=\sqrt{(\min\{\theta_0, 2\pi-\theta_0\})^2+(\min\{\theta_1, 2\pi-\theta_1\})^2}$.
	
Assumption \textbf{(A1)} implies that $\Gamma$ is a bijective map from $\partial D$ to $I_{2\pi}^2$. Supported by the proof of Theorem 6.23 in \cite{kress2013linear}, this assumption enables us to establish the existence and uniqueness of the solution for \eqref{orig_bie}.

We revisit the classical Fourier-Galerkin method used to solve the equation \eqref{orig_bie}. We define $\mathbb{Z}$ as the set of all integers. For all $m,n\in\mathbb{Z}$ with $m\leq n$, let $\mathbb{Z}_{m,n}:=\{k\in\mathbb{Z}:m\leq k\leq n\}$. For each $k\in\mathbb{Z}$, we define $e_k(\phi):=\frac{1}{\sqrt{2\pi}}e^{ik\phi}$ for $\phi\in I_{2\pi}$, where $i$ represents the imaginary unit. Additionally, for any pair $\mathbf{k}:=[k_0,k_1]\in\mathbb{Z}^2$, we set $e_{\mathbf{k}}(\boldsymbol{\theta}):=e_{k_0}(\theta_0)e_{k_1}(\theta_1)$ for $\boldsymbol{\theta}\in I_{2\pi}^2$. For all $n\in\mathbb{Z}$, let $X_n$ denote the finite dimensional space spanned by the basis $\{e_{\mathbf{k}}:\mathbf{k}\in\mathbb{Z}_{-n,n}^2\}$, and define $\mathcal{P}_n$ as the orthogonal projection from ${L_2}(I_{2\pi}^2)$ onto $X_n$. Then $\mathcal{P}_n$ is given by $\mathcal{P}_n \varphi=\sum_{\mathbf{k}\in\mathbb{Z}_{-n,n}^2}\left<\varphi,e_\mathbf{k}\right>e_\mathbf{k}$ for all $\varphi\in {L_2}(I_{2\pi}^2)$, where $\left<\cdot,\cdot\right>$ denotes the inner product in ${L_2}(I_{2\pi}^2)$. The goal of the Fourier-Galerkin method for solving \eqref{orig_bie} is to find $\rho_n\in X_n$ such that	
\begin{equation}\label{Galerkin_operator_eq}
	\rho_n-\mathcal{K}_n\rho_n=\mathcal{P}_n g,
\end{equation}
where $\mathcal{K}_n:=\mathcal{P}_n\mathcal{K}$.
For convenience, we \emph{denote the degrees of freedom in \eqref{Galerkin_operator_eq} as $N=(2n+1)^2$ throughout this paper}. Define $\mathbf{g}_N:=\left[\left<g,e_{\mathbf{k}}\right>:\mathbf{k}\in\mathbb{Z}_{-n,n}^2\right]$, and $\mathbf{K}_N:=\left[K_{\mathbf{k},\mathbf{l}}:\mathbf{k},\mathbf{l}\in\mathbb{Z}_{-n,n}^2\right]$, where $K_{\mathbf{k},\mathbf{l}}:=\left<\mathcal{K}e_{\mathbf{l}}, e_{\mathbf{k}}\right>$. Therefore, solving equation \eqref{Galerkin_operator_eq} is equivalent to finding $\boldsymbol{\rho}_N$ that satisfies
\begin{equation}\label{Galerkin_linear_system}
	(\mathbf{I}_N-\mathbf{K}_N)\boldsymbol{\rho}_N=\mathbf{g}_N,
\end{equation}
where $\mathbf{I}_N$ is the identity matrix.

Typically, the entries of $\mathbf{K}_N$ are nonzero. Constructing this $N$-th order matrix requires the computation of $\mathcal{O}(N^2)$ quadruple integrals, leading to significantly higher computational costs as $n$ increases. To address this challenge, a truncation strategy that approximates $\mathbf{K}_N$  with a sparse matrix is essential.
Truncation strategies for the Fourier-Galerkin method have been extensively studied when solving two-dimensional BIEs \cite{cai2018fast,jiang2014fast,2021jiangfast}. In two-dimensional cases, the kernel functions of BIEs are usually square-integrable, which facilitates in analyzing the decay patterns of the coefficient matrix entries and designing truncation strategies that can achieve high accuracy and linear or quasi-linear complexity. However, in three-dimensional cases, the kernel functions are not square-integrable, and the decay patterns of their entries under discretization by Fourier basis are not well understood. This means that the truncation strategies used in two-dimensional cases cannot be directly applied to three-dimensional BIEs.

Fortunately, the matrix $\mathbf{K}_N$ is well-organized in the sense that the very few non-negligible entries occur near a few
shifted diagonals, while the entries decay rapidly in the direction of the anti-diagonal. These behaviors are illustrated in Figure \ref{fig:coef}, where image (a) shows the modulus values of the entries in the matrix $\mathbf{K}_{121}$. In image (b) of Figure \ref{fig:coef}, the red dashed line represents the modulus values of the entries along the main diagonal of $\mathbf{K}_{121}$, while the green solid line represents those along the anti-diagonal. Based on these observations, we propose a truncation strategy for $\mathbf{K}_N$ aimed at efficiently reducing computational demands while preserving accuracy. A rigorous analysis of the decay pattern of the entries in $\mathbf{K}_N$ will be provided in Section \ref{sec:3}. This analysis will help estimate the error introduced by our truncation strategy and confirm that the strategy preserves the convergence order of the traditional Fourier-Galerkin method.


\begin{figure}[h]
\centering
\begin{minipage}[c]{5.2cm}
\includegraphics[height=4.4cm,width=5.2cm]{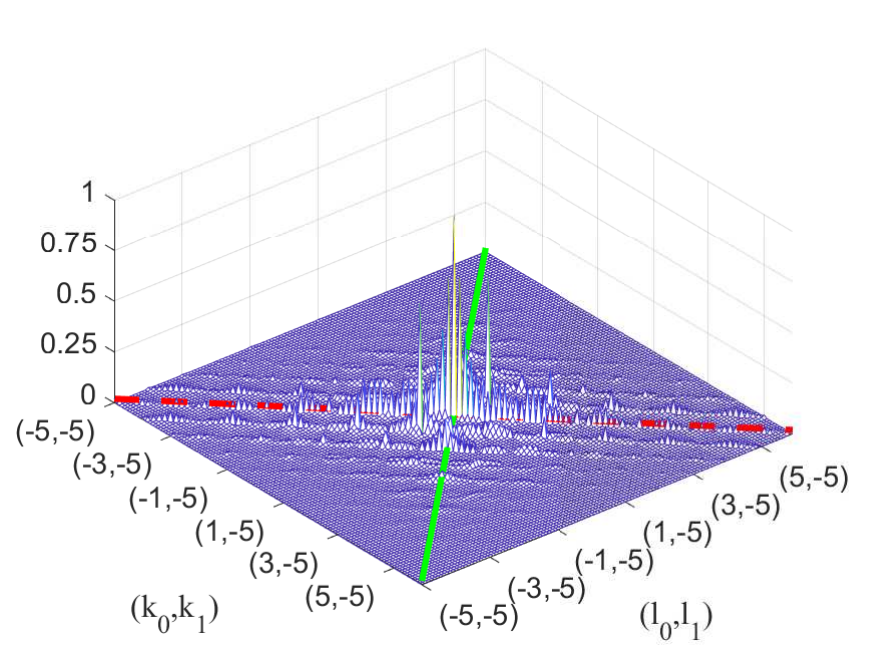} \centering (a)
\end{minipage}
\hspace{0.5cm}
\begin{minipage}[c]{6.0cm}
\includegraphics[height=4.4cm]{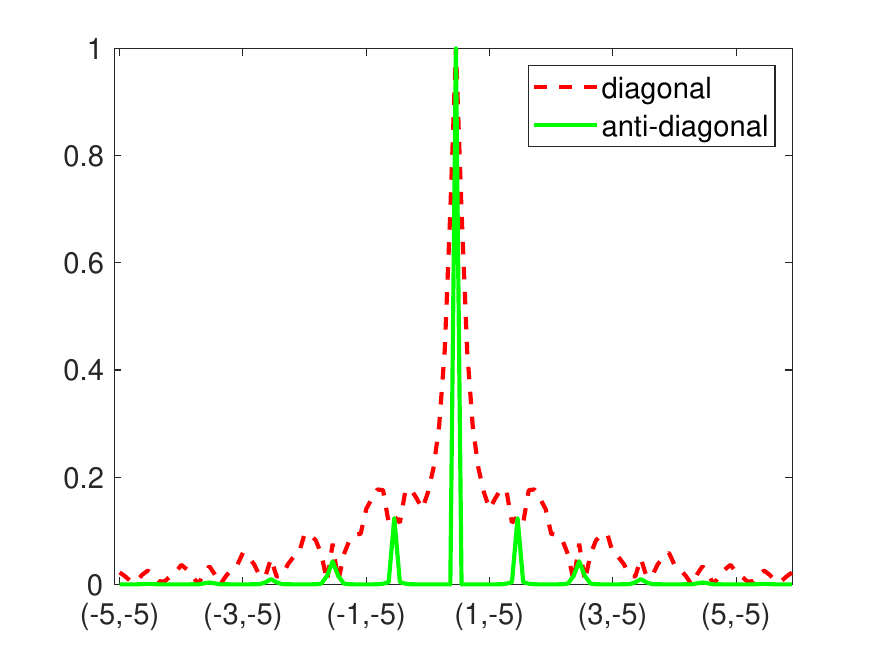}
\centering (b)
\end{minipage}\\
\begin{minipage}[c]{5.2cm}
\includegraphics[height=4.4cm,width=5.2cm]{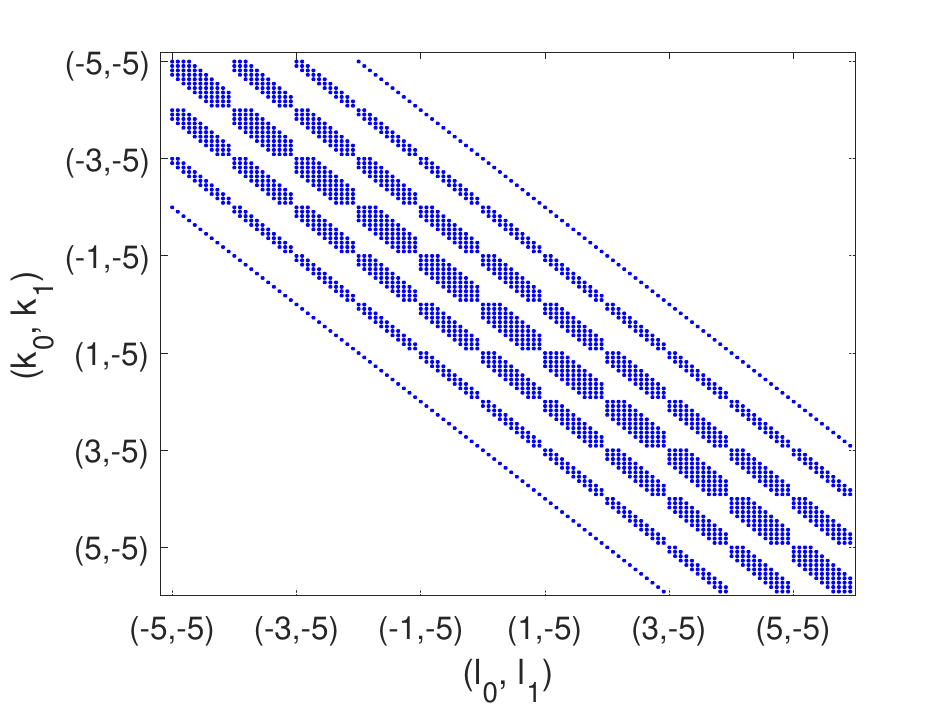}
\centering (c)
\end{minipage}
\caption{(a) Modulus values of the entries in $\mathbf{K}_{121}$; (b) Modulus values of the entries on the main diagonal and anti-diagonal of $\mathbf{K}_{121}$; (c) Distribution of nonzero entries of $\widetilde{\mathbf{K}}_{121}$.}
\label{fig:coef}
\end{figure}
	
We develop the truncation strategy as follows. For all $\mathbf{k}:=[k_0,k_1]\in\mathbb{Z}^2$,  we define $\|\mathbf{k}\|_1:=|k_0|+|k_1|$. For $n\in\mathbb{N}$ and $q>0$, let  $\mathbb{L}_N(q):=\left\{(\mathbf{k},\mathbf{l}) :\mathbf{k},\mathbf{l}\in \mathbb{Z}_{-n,n}^2 {\rm ~and~} \|\mathbf{k}-\mathbf{l}\|_1\leq q\ln N\right\}$. The truncation strategy involves retaining the matrix entries $K_{\mathbf{k}, \mathbf{l}}$ for $(\mathbf{k}, \mathbf{l})\in \mathbb{L}_N(q)$, and replacing all other entries with zeros. Specifically, for each pair $(\mathbf{k},\mathbf{l})\in \mathbb{Z}_{-n,n}^4$, we define
$$
\widetilde{K}_{\mathbf{k},\mathbf{l}}:=\left\{
\begin{aligned}
	&K_{\mathbf{k},\mathbf{l}},\quad &&(\mathbf{k},\mathbf{l})\in\mathbb{L}_N(q),\\
	&0,\quad &&\mathrm{otherwise}.
\end{aligned}
\right.
$$
This results in a sparse matrix  $\widetilde{\mathbf{K}}_N:=\left[\widetilde{K}_{\mathbf{k},\mathbf{l}}:\mathbf{k},\mathbf{l}\in\mathbb{Z}_{-n,n}^2\right]$, often referred to as the truncated matrix of $\mathbf{K}_N$.
The distribution of nonzero entries of $\widetilde{\mathbf{K}}_{121}$ is illustrated in the image (c) of Figure \ref{fig:coef}.
The truncation strategy leads to the fast Fourier-Galerkin method which is to find $\tilde{\boldsymbol{\rho}}_N$ that satisfies
\begin{equation}\label{truncated_matrix_equation}
	(\mathbf{I}_N-\widetilde{\mathbf{K}}_N)\tilde{\boldsymbol{\rho}}_N=\mathbf{g}_N.
\end{equation}

We denote the number of nonzero entries in matrix $\widetilde{\mathbf{K}}_N$ by $\mathcal{N} (\widetilde{\mathbf{K}}_N)$.

\begin{theorem}
Let $q>0$. Then there exists a positive constant $c$ such that for all $n \in\mathbb{N}$,
$\mathcal{N} (\widetilde{\mathbf{K}}_N)\leq c N\ln^2 N$, where $c$ only depends on $q$, and $N=(2n+1)^2$ is the order of $\widetilde{\mathbf{K}}_N$.
\end{theorem}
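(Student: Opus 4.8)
The plan is to prove the bound by a direct lattice-point counting argument, since $\mathcal{N}(\widetilde{\mathbf{K}}_N)$ is precisely the cardinality of the index set $\mathbb{L}_N(q)$. First I would fix $\mathbf{k}\in\mathbb{Z}_{-n,n}^2$ and bound the number of indices $\mathbf{l}\in\mathbb{Z}_{-n,n}^2$ with $(\mathbf{k},\mathbf{l})\in\mathbb{L}_N(q)$, i.e. with $\|\mathbf{k}-\mathbf{l}\|_1\le q\ln N$. Setting $\mathbf{m}:=\mathbf{k}-\mathbf{l}$ and discarding the only restrictive constraint $\mathbf{l}\in\mathbb{Z}_{-n,n}^2$ (dropping it only enlarges the set), this number is at most $\#\{\mathbf{m}\in\mathbb{Z}^2:\|\mathbf{m}\|_1\le q\ln N\}$, a quantity that is independent of $\mathbf{k}$.

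The only substantive ingredient is the elementary identity that for every $r\ge 0$,
\begin{equation*}
	\#\{\mathbf{m}\in\mathbb{Z}^2:\|\mathbf{m}\|_1\le r\}=2\lfloor r\rfloor^2+2\lfloor r\rfloor+1\le(2r+1)^2,
\end{equation*}
which follows by summing the $4s$ lattice points on the diamond $\|\mathbf{m}\|_1=s$ over $s=1,\dots,\lfloor r\rfloor$ and adding the origin. Taking $r=q\ln N$ and summing over the $N=(2n+1)^2$ admissible choices of $\mathbf{k}$ then gives
\begin{equation*}
	\mathcal{N}(\widetilde{\mathbf{K}}_N)=\#\mathbb{L}_N(q)\le N\,(2q\ln N+1)^2.
\end{equation*}

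Finally, since $n\in\mathbb{N}$ forces $N=(2n+1)^2\ge 9$, we have $\ln N\ge\ln 9>0$, hence $2q\ln N+1\le(2q+1/\ln 9)\ln N$, and therefore $\mathcal{N}(\widetilde{\mathbf{K}}_N)\le c\,N\ln^2 N$ with $c:=(2q+1/\ln 9)^2$, which depends on $q$ alone. I do not expect any genuine obstacle here; the single point that needs a little care is the uniform-in-$n$ absorption of the additive constant inside the square, which is exactly what the lower bound $N\ge 9$ (equivalently, that $\ln N$ stays bounded away from $0$) supplies. An alternative bookkeeping, grouping the retained entries by the value $\|\mathbf{k}-\mathbf{l}\|_1=j$ for $j=0,1,\dots,\lfloor q\ln N\rfloor$, with each value contributing $O(j)$ offset vectors and $O(N)$ index pairs, leads to the same $O(N\ln^2 N)$ total.
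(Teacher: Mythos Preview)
Your argument is correct and follows essentially the same route as the paper: fix $\mathbf{k}$, bound the number of admissible $\mathbf{l}$ by the count of lattice points in an $\ell_1$-ball of radius $q\ln N$ (the paper uses the cruder containing $\ell_\infty$-box, arriving at the identical bound $(2q\ln N+1)^2$), then sum over the $N$ choices of $\mathbf{k}$. One small inaccuracy: $\mathcal{N}(\widetilde{\mathbf{K}}_N)$ is at most, not \emph{precisely}, $\#\mathbb{L}_N(q)$, since some retained entries $K_{\mathbf{k},\mathbf{l}}$ could themselves vanish---but this only helps the inequality you need, so the proof stands.
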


\begin{proof}
Given the definition of $\widetilde{\mathbf{K}}_N$, the number of its nonzero entries, $\mathcal{N} (\widetilde{\mathbf{K}}_N)$, does not exceed the number of entries in the set $\mathbb{L}_N(q)$.
For $q>0$, $n\in\mathbb{N}$ and $\mathbf{k}\in\mathbb{Z}_{-n,n}^2$, we define $\mathbb{S}_{\mathbf{k},N}(q):=\left\{\mathbf{l}\in\mathbb{Z}_{-n,n}^2:\|\mathbf{k}-\mathbf{l}\|_1\leq q\ln N\right\}$. It is clear that
$$
\mathbb{L}_N(q)=\left\{(\mathbf{k},\mathbf{l}): \mathbf{k} \in \mathbb{Z}_{-n,n}^2 {~\rm and~}\mathbf{l}\in \mathbb{S}_{\mathbf{k},N}(q)\right\}.
$$
Meanwhile, the inequality
$\|\mathbf{k}-\mathbf{l}\|_1\leq q\ln N$
indicates that $|k_\iota - l_\iota| \leq q\ln N$ for each $\iota \in \mathbb{Z}_2$.
This implies that for all $\mathbf{k} \in \mathbb{Z}_{-n,n}^2$, it holds that $\sum_{\mathbf{l} \in \mathbb{S}_{\mathbf{k},N}(q)} 1 \leq (2q\ln N+1)^2$. Consequently, we have
\begin{equation*}
\mathcal{N}(\widetilde{\mathbf{K}}_N) = \sum_{\mathbf{k} \in \mathbb{Z}_{-n,n}^2} \sum_{\mathbf{l} \in \mathbb{S}_{\mathbf{k},N}(q)} 1 \leq \sum_{\mathbf{k} \in \mathbb{Z}_{-n,n}^2} (2q\ln N+1)^2 = (2n+1)^2 (2q\ln N+1)^2.
\end{equation*}
Noting that $N = (2n+1)^2$, we achieve the desired result.
\end{proof}

\section{Analyzing the entries in $\mathbf{K}_N$}
\label{sec:3}

In this section, we explore the decay pattern of entries in $\mathbf{K}_N$ to evaluate the effectiveness of the proposed fast Fourier-Galerkin method. To facilitate this analysis, we define $G(\boldsymbol{\theta}, \boldsymbol{\vartheta}) := K(\boldsymbol{\theta}, \boldsymbol{\theta} + \boldsymbol{\vartheta})$ for all $\boldsymbol{\theta}, \boldsymbol{\vartheta} \in I^2_{2\pi}$. For a function $\varphi$ defined on $I^2_{2\pi}$ and for all $\mathbf{k} \in \mathbb{Z}^2$, we define the Fourier coefficient of $\varphi$ by
$
\hat{\varphi}_\mathbf{k}:=\int_{I^2_{2\pi}}\varphi(\boldsymbol{\theta})e_{-\mathbf{k}}(\boldsymbol{\theta})d\boldsymbol{\theta}.
$
Similarly, for a function $\Phi$ defined on $I^4_{2\pi}$ and for all $\mathbf{k}, \mathbf{l} \in \mathbb{Z}^2$, the Fourier coefficient of $\Phi$ is given by
$$
\hat{\Phi}_{\mathbf{k},\mathbf{l}}:=\int_{I^2_{2\pi}}\int_{I^2_{2\pi}}\Phi(\boldsymbol{\theta},\boldsymbol{\eta})e_{-\mathbf{k}}(\boldsymbol{\theta})e_{-\mathbf{l}}(\boldsymbol{\phi})d\boldsymbol{\theta}d\boldsymbol{\eta}.
$$
Given the periodicity of the kernel function $K$, it is evident that for all $\mathbf{k}, \mathbf{l} \in \mathbb{Z}^2$, the relationship $\hat{G}_{\mathbf{k}, \mathbf{l}} = \hat{K}_{\mathbf{k} - \mathbf{l}, \mathbf{l}} = K_{\mathbf{k} - \mathbf{l}, -\mathbf{l}}$ holds. By analyzing the decay pattern of the Fourier coefficients of $G$, we can thus determine the decay pattern of the entries in $\mathbf{K}_N$.

The definition of $G$ shows that $G$ is not singular at any point $(\boldsymbol{\theta},\boldsymbol{\vartheta})\in I^4_{2\pi}$ where $\zeta(\boldsymbol{\vartheta})\neq 0$. Moreover, when the functions $\gamma_j$, for $j\in\mathbb{Z}_3$, exhibit holomorphic properties in a domain of $\mathbb{C}^2$, for given $\boldsymbol{\vartheta}\in I_{2\pi}^2$ satisfying $\zeta(\boldsymbol{\vartheta})\neq 0$, the function $G(\cdot, \boldsymbol{\vartheta})$ can be analytically extended to a certain domain in $\mathbb{C}^2$. This raises a crucial question: \emph{How does the analytic extension domain of the function $G(\cdot, \boldsymbol{\vartheta})$ vary with changes in $\boldsymbol{\vartheta}$}?
Understanding this variation will help analyze the decay pattern of the Fourier coefficients of
$G$. To answer the above question, we next introduce a concept called uniformly analytic extension.
Utilizing this concept, we then proceed to analyze the decay patterns of both the Fourier coefficients of $G$ and the entries in  $\mathbf{K}_N$.

\subsection{Uniformly analytic extension}

Define $\mathbb{W}(r):=\{z=x+iy:x\in\mathbb{R},-r<y<r\}$ for all $r>0$.
For a given function $\Phi$ defined on $\mathbb{R}^2\otimes I_{2\pi}^2$ and $r>0$, if there is a function $\tilde{\Phi}$ defined on $\mathbb{W}(r)^2\otimes I_{2\pi}^2$ such that for all $\boldsymbol{\vartheta}\in I_{2\pi}^2$ with $\zeta(\boldsymbol{\vartheta})\neq 0$, $\tilde{\Phi}(\cdot,\boldsymbol{\vartheta})$ is holomorphic on $\mathbb{W}(r)^2$ and satisfies $\tilde{\Phi}(\cdot,\boldsymbol{\vartheta})=\Phi(\cdot,\boldsymbol{\vartheta})$ on $\mathbb{R}^2$, then we say $\tilde{\Phi}$ is the uniformly analytic extension of $\Phi$, and $\Phi$ can be uniformly analytic extended to $\mathbb{W}(r)^2$ with respect to $\boldsymbol{\theta}$.
To simplify notations in this article, $\Phi$ will also denote its uniformly analytic extension $\tilde{\Phi}$. As part of our preparation for analyzing the decay pattern of the Fourier coefficients of $G$, we will next study the Fourier coefficients of $\Phi$ which can be uniformly analytic extended to $\mathbb{W}(r)^2$ with respect to $\boldsymbol{\theta}$.
	
The notation $|\cdot|$ denotes the complex modulus.
Define
$$
\mathbb{U}(r):=\{z=x+iy:x\in I_{2\pi},-r<y <r\}
$$
and
$$\mathbb{V}(r):= \{w\in\mathbb{C}:e^{-r}<|w|<e^{r}\}
$$
for all $r>0$.
Let $\mu(z):=e^{iz}$ for $z\in \mathbb{U}(r)$, which bijectively maps $\mathbb{U}(r)$ to $\mathbb{V}(r)$.
The inverse of $\mu$ is denoted by $\mu^{-1}$.
For any $0<\delta<r$, $\boldsymbol{\vartheta}\in I_{2\pi}^2$ with $\zeta(\boldsymbol{\vartheta})\neq 0$, and a function $\Phi$ defined on $\mathbb{W}(r)^2\otimes I_{2\pi}^2$ that is continuous on $\mathbb{W}(r)^2$, we define $S_{\delta,\Phi}(\boldsymbol{\vartheta}):=2\pi\sup\{| \Phi(\mathbf{z},\boldsymbol{\vartheta})|:\mathbf{z}=(z_0,z_1)\in \mathbb{C}^2,{~\rm and~}|\mathrm{Im}(z_0)|=|\mathrm{Im}(z_1)|=\delta \}$.
For any $\sigma>0$, define $\tau_\sigma:=\{w\in\mathbb{C}:|w|=\sigma \}$.

\begin{lemma}\label{bivarite_four_coe}
Let $\Phi$ be a real-valued function define on $\mathbb{R}^2\otimes I_{2\pi}^2$, and $r>0$.
If $\Phi$ can be uniformly analytic extended to $\mathbb{W}(r)^2$ with respect to $\boldsymbol{\theta}$, and function $\Phi(\cdot, \boldsymbol{\vartheta})$ is $2\pi$-biperiodic for any $\boldsymbol{\vartheta}\in I_{2\pi}^2$ with $\zeta(\boldsymbol{\vartheta})\neq 0$,
then for all $0<\delta<r$, $\boldsymbol{\vartheta}\in I_{2\pi}^2$ with $\zeta(\boldsymbol{\vartheta})\neq 0$ and  $\mathbf{k}\in \mathbb{Z}^2$, there is
\begin{equation*}\label{part_four_ineq}
	\left| \widehat{ (\Phi(\cdot,\boldsymbol{\vartheta}))}_{\mathbf{k}}\right|\leq S_{\delta,\Phi}(\boldsymbol{\vartheta})e^{-\delta\Vert\mathbf{k}\Vert_1}.
\end{equation*}
\end{lemma}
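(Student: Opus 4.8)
The plan is to fix $\boldsymbol{\vartheta}\in I_{2\pi}^2$ with $\zeta(\boldsymbol{\vartheta})\neq 0$ and to recognise $\widehat{(\Phi(\cdot,\boldsymbol{\vartheta}))}_{\mathbf{k}}$ as a two‑dimensional Laurent coefficient, via the substitution $w_\iota=\mu(z_\iota)=e^{iz_\iota}$ for $\iota\in\mathbb{Z}_2$. Since $\Phi(\cdot,\boldsymbol{\vartheta})$ is holomorphic on $\mathbb{W}(r)^2$ and, by hypothesis, $2\pi$‑biperiodic in each variable, it descends through $\mu$ to a single‑valued function $\Psi$ on $\mathbb{V}(r)^2$ with $\Phi(z_0,z_1,\boldsymbol{\vartheta})=\Psi(e^{iz_0},e^{iz_1})$, and $\Psi$ is holomorphic separately in each $w_\iota$. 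Starting from $\widehat{(\Phi(\cdot,\boldsymbol{\vartheta}))}_{\mathbf{k}}=\int_{I_{2\pi}^2}\Phi(\boldsymbol{\theta},\boldsymbol{\vartheta})e_{-\mathbf{k}}(\boldsymbol{\theta})\,d\boldsymbol{\theta}=\frac{1}{2\pi}\int_0^{2\pi}\!\!\int_0^{2\pi}\Phi(\theta_0,\theta_1,\boldsymbol{\vartheta})e^{-ik_0\theta_0-ik_1\theta_1}\,d\theta_0\,d\theta_1$ and putting $w_\iota=e^{i\theta_\iota}$ (so $d\theta_\iota=dw_\iota/(iw_\iota)$ and $e^{-ik_\iota\theta_\iota}=w_\iota^{-k_\iota}$), the coefficient becomes $-\frac{1}{2\pi}\oint_{\tau_1}\oint_{\tau_1}\Psi(w_0,w_1)\,w_0^{-k_0-1}w_1^{-k_1-1}\,dw_0\,dw_1$.

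The next step is to deform the contours. Because $\Psi$ is holomorphic on $\mathbb{V}(r)^2$, and $\Phi(\cdot,\boldsymbol{\vartheta})$ — hence $\Psi$ — is continuous up to $|w_\iota|=e^{\pm\delta}$ since $\delta<r$, Cauchy's integral theorem applied in one variable at a time, with the other frozen on its circle, lets us replace $\tau_1$ in the $w_\iota$‑integral by $\tau_{\sigma_\iota}$ for any $e^{-\delta}\le\sigma_\iota\le e^{\delta}$ without changing the value. I would choose $\sigma_\iota:=e^{\delta}$ when $k_\iota\ge 0$ and $\sigma_\iota:=e^{-\delta}$ when $k_\iota<0$, so that $\sigma_\iota^{-k_\iota}=e^{-\delta|k_\iota|}$ in all cases. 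Bounding the double integral by the product of the circle lengths $2\pi\sigma_0\cdot 2\pi\sigma_1$ times the maximum of $|\Psi(w_0,w_1)|\sigma_0^{-k_0-1}\sigma_1^{-k_1-1}$ over $|w_0|=\sigma_0$, $|w_1|=\sigma_1$, the radii collapse to the single factor $e^{-\delta(|k_0|+|k_1|)}=e^{-\delta\|\mathbf{k}\|_1}$, and what remains is $2\pi$ times $\sup\{|\Psi(w_0,w_1)|:|w_0|=\sigma_0,\ |w_1|=\sigma_1\}$. Translating this supremum back through $\mu$ (note $|e^{iz}|=e^{-\mathrm{Im}(z)}$, so $|w_\iota|=e^{\pm\delta}$ corresponds to $|\mathrm{Im}(z_\iota)|=\delta$) bounds it by $S_{\delta,\Phi}(\boldsymbol{\vartheta})/(2\pi)$, and the constants combine to give exactly $\bigl|\widehat{(\Phi(\cdot,\boldsymbol{\vartheta}))}_{\mathbf{k}}\bigr|\le S_{\delta,\Phi}(\boldsymbol{\vartheta})e^{-\delta\|\mathbf{k}\|_1}$.

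I expect the only genuinely delicate point to be the rigorous justification of the contour deformation: one must check that $\Phi(\cdot,\boldsymbol{\vartheta})$ is continuous on the closed substrip $\{|\mathrm{Im}(z_\iota)|\le\delta\}^2$ (immediate from $\delta<r$ and holomorphy on the open $\mathbb{W}(r)^2$), that the passage to the annulus is legitimate — this is precisely where $2\pi$‑biperiodicity enters, making $\Psi$ well defined and, incidentally, eliminating any vertical side contributions a naive strip argument would have to cancel — and that the iterated deformation (equivalently, an application of Fubini) is valid, for which separate holomorphy in each variable together with joint continuity suffices. Everything else is bookkeeping of the constants $1/(2\pi)$, the $2\pi$'s from $dw_\iota$ and the circle lengths, and the $2\pi$ built into $S_{\delta,\Phi}$; in particular the real‑valuedness of $\Phi$ on $\mathbb{R}^2$ plays no role in this estimate. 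A fully equivalent alternative avoids $\mu$ altogether: shift each $\theta_\iota$ in the original integral to $\theta_\iota-i\delta$ when $k_\iota\ge 0$ and to $\theta_\iota+i\delta$ when $k_\iota<0$, using $2\pi$‑periodicity to cancel the endpoint segments; this produces the same factor $e^{-\delta\|\mathbf{k}\|_1}$ and the same supremum.
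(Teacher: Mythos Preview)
Your proposal is correct and follows essentially the same route as the paper: pass to the annulus via $w_\iota=e^{iz_\iota}$, identify the Fourier coefficient with a Laurent coefficient, exploit the freedom in the integration radii $\sigma_\iota\in(e^{-r},e^{r})$ to choose $\sigma_\iota=e^{\pm\delta}$ according to the sign of $k_\iota$, and bound via the ML inequality and the definition of $S_{\delta,\Phi}$. The only cosmetic difference is that the paper justifies the radius-independence by citing a bivariate Laurent expansion theorem (Theorem~1.118 in \cite{Scheidemann2023Complex}) rather than phrasing it as a direct contour deformation, but the content is identical.
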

	
\begin{proof}
Let $0<\delta<r$, $\boldsymbol{\vartheta}\in I_{2\pi}^2$ with $\zeta(\boldsymbol{\vartheta})\neq 0$ and $\mathbf{k}:=[k_0,k_1]\in \mathbb{Z}^2$.
The conditions of this lemma ensure that $\Phi(\cdot,\boldsymbol{\vartheta})$ is holomorphic  and $2\pi$-biperiodic on $\mathbb{W}(r)^2$. Moreover, since $\mu^{-1}$ is a holomorphic mapping from $\mathbb{V}(r)$ to $\mathbb{U}(r)\subset \mathbb{W}(r)$, the function $F_{\boldsymbol{\vartheta}}$ defined by
$F_{\boldsymbol{\vartheta}}(w_0,w_1):= \Phi(\mu^{-1}(w_0),\mu^{-1}(w_1),\boldsymbol{\vartheta})$, $[w_0,w_1]\in \mathbb{V}(r)^2$,
is also holomorphic on $\mathbb{V}(r)^2$.
Therefore, it follows from Theorem 1.118 in \cite{Scheidemann2023Complex} that the function $F_{\boldsymbol{\vartheta}}$ can be represented as the following Laurent series,
\begin{equation}\label{Laurent_series}
F_{\boldsymbol{\vartheta}}(w_0,w_1)=\sum_{l_0\in\mathbb{Z}}\sum_{l_1\in\mathbb{Z}}a_{l_0,l_1}(\boldsymbol{\vartheta})w_0^{l_0}w_1^{l_1},\quad [w_0,w_1]\in \mathbb{V}(r)^2,
\end{equation}
with the coefficients
\begin{equation}\label{Laurent_coef}
a_{l_0,l_1}(\boldsymbol{\vartheta}):=\frac{1}{(2\pi i)^2}\oint_{\tau_{\sigma_0}}\oint_{\tau_{\sigma_1}}\frac{F_{\boldsymbol{\vartheta}}(\xi_0,\xi_1)}{\xi_0^{l_0+1}\xi_1^{l_1+1}}d \xi_1 d\xi_0,
\end{equation}
where $\sigma_0$ and $\sigma_1$ can be arbitrary values in   the interval $(e^{-r }, e^{r})$.
		
Setting $l_0=k_0$, $l_1=k_1$ and $\sigma_0=\sigma_1=1$, the change of variables $\xi_0=e^{it_0}$ and $\xi_1=e^{it_1}$ in \eqref{Laurent_coef} results in the following expression
\begin{equation}\label{eq_akk}
	a_{k_0,k_1}(\boldsymbol{\vartheta})=\frac{1}{(2\pi )^2}\int_{I_{2\pi}}\int_{I_{2\pi}}\frac{F_{\boldsymbol{\vartheta}}(e^{it_0},e^{it_1})}{e^{it_0k_0+it_1k_1}}d t_1 dt_0.
\end{equation}
The definitions of $\mu$ and $F_{\boldsymbol{\vartheta}}$ ensure that for all $\mathbf{t}=[t_0,t_1]\in I_{2\pi}^2$, there is $F_{\boldsymbol{\vartheta}}(e^{it_0},e^{it_1})= \Phi(\mathbf{t},\boldsymbol{\vartheta})$. Substituting this equality into \eqref{eq_akk} leads to $(\widehat{ \Phi(\cdot,\boldsymbol{\vartheta})})_\mathbf{k}=2\pi  a_{k_0,k_1}(\boldsymbol{\vartheta})$.
Furthermore, from \eqref{Laurent_coef}, we can infer that for all $e^{-r }<\sigma_0< e^{r}$ and $e^{-r }<\sigma_1< e^{r}$,
\begin{equation*}
(\widehat{\Phi(\cdot,\boldsymbol{\vartheta})})_\mathbf{k}=-\frac{1}{2\pi}\oint_{\tau_{\sigma_0}}\oint_{\tau_{\sigma_1}}\frac{F_{\boldsymbol{\vartheta}}(\xi_0,\xi_1)}{
\xi_0^{k_0+1}\xi_1^{k_1+1}}d \xi_1 d\xi_0.
\end{equation*}
Therefore, applying the ML inequality (see Proposition 1.17(b) in \cite{Conway1978}) to the right hand side of the above equality demonstrates that for all $e^{-r }<\sigma_0< e^{r}$ and $e^{-r }<\sigma_1< e^{r}$,
\begin{equation}\label{ML_ineq}
|(\widehat{ \Phi(\cdot,\boldsymbol{\vartheta})})_\mathbf{k}|\leq{2\pi}M_{\sigma_0,\sigma_1}(\boldsymbol{\vartheta})\sigma_0^{-k_0}\sigma_1^{-k_1},
\end{equation}
where $M_{\sigma_0,\sigma_1}(\boldsymbol{\vartheta}):=\sup\{|F_{\boldsymbol{\vartheta}}(w_0,w_1)|:w_\iota\in\tau_{\sigma_\iota},\iota\in\mathbb{Z}_2\}$.
Note that $\mu$ bijectively maps $\{x+iy:x\in I_{2\pi} {\rm~and~} y=-\ln\sigma\}$ to $\tau_\sigma$ for all $e^{-r }<\sigma< e^{r}$. Hence, from the definitions of $F_{\boldsymbol{\vartheta}}$ and $M_{\sigma_0,\sigma_1}(\boldsymbol{\vartheta})$, we establish that for all $e^{-r }<\sigma_0< e^{r}$ and $e^{-r }<\sigma_1< e^{r}$,
\begin{eqnarray}\label{def_rho_j}
	\nonumber  M_{\sigma_0,\sigma_1}(\boldsymbol{\vartheta})&=&\sup\{| \Phi(\mathbf{z},\boldsymbol{\vartheta})|:\mathbf{z}=[z_0,z_1]\in\mathbb{C}^2, \mathrm{Im}(z_\iota)=-\ln\sigma_\iota,\iota\in\mathbb{Z}_2\}\\
	&\leq& \sup\{| \Phi(\mathbf{z},\boldsymbol{\vartheta})|:\mathbf{z}=[z_0,z_1]\in\mathbb{C}^2, |\mathrm{Im}(z_\iota)|=|\ln\sigma_\iota|,\iota\in\mathbb{Z}_2\}.
\end{eqnarray}
Therefore, by setting $\sigma_\iota=e^{\delta}$ when $k_\iota\geq 0$ and $\sigma_\iota=e^{-\delta}$ when $k_\iota< 0$, and substituting \eqref{def_rho_j} into \eqref{ML_ineq}, we achieve the desired result based on the definition of $S_{\delta,\Phi}(\boldsymbol{\vartheta})$.
\end{proof}

Next, we establish an upper bound for the modulus of the Fourier coefficients of $\Phi$ which meets the conditions specified in Lemma \ref{bivarite_four_coe}.
	
\begin{theorem}\label{four_decay_general}
Let $\Phi$ be a real-valued function define on $\mathbb{R}^2\otimes I_{2\pi}^2$, and $r>0$.
If function $\Phi$ satisfies the conditions outlined in Lemma \ref{bivarite_four_coe}, and there exists $M>0$ such that for all $0<\delta<r$,
\begin{equation}\label{eq_four_decay_general0}
	\int_{I_{2\pi}^2} S_{\delta,\Phi}(\boldsymbol{\vartheta})d\boldsymbol{\vartheta}\leq M,
\end{equation}
then for all $0<\delta<r$ and $\mathbf{k}, \mathbf{l}\in\mathbb{Z}^2$, there holds
$\left|\hat{\Phi}_{\mathbf{k},\mathbf{l}}\right|\leq\frac{M}{2\pi}e^{-\delta\Vert\mathbf{k}\Vert_1}$.
\end{theorem}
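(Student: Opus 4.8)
The plan is to obtain the bivariate bound directly from the univariate estimate of Lemma~\ref{bivarite_four_coe} by integrating out the sheared variable against the bounded character $e_{-\mathbf{l}}$. Fix $0<\delta<r$ and $\mathbf{k},\mathbf{l}\in\mathbb{Z}^2$. Denoting by $\boldsymbol{\vartheta}$ the second argument of $\Phi$ and using that $\Phi(\cdot,\boldsymbol{\vartheta})$ is $2\pi$-biperiodic, the definition of $\hat{\Phi}_{\mathbf{k},\mathbf{l}}$ together with Fubini's theorem gives
\begin{equation*}
\hat{\Phi}_{\mathbf{k},\mathbf{l}}=\int_{I_{2\pi}^2}\Bigl(\int_{I_{2\pi}^2}\Phi(\boldsymbol{\theta},\boldsymbol{\vartheta})\,e_{-\mathbf{k}}(\boldsymbol{\theta})\,d\boldsymbol{\theta}\Bigr)e_{-\mathbf{l}}(\boldsymbol{\vartheta})\,d\boldsymbol{\vartheta}=\int_{I_{2\pi}^2}\widehat{(\Phi(\cdot,\boldsymbol{\vartheta}))}_{\mathbf{k}}\,e_{-\mathbf{l}}(\boldsymbol{\vartheta})\,d\boldsymbol{\vartheta}.
\end{equation*}
Taking moduli under the integral sign, using $|e_{-\mathbf{l}}(\boldsymbol{\vartheta})|=\tfrac{1}{2\pi}$, and applying Lemma~\ref{bivarite_four_coe} to the inner coefficient for each $\boldsymbol{\vartheta}$ with $\zeta(\boldsymbol{\vartheta})\neq0$ --- which excludes only $\boldsymbol{\vartheta}=(0,0)$, a null set in $I_{2\pi}^2$ --- yields
\begin{equation*}
\bigl|\hat{\Phi}_{\mathbf{k},\mathbf{l}}\bigr|\leq\frac{1}{2\pi}\,e^{-\delta\|\mathbf{k}\|_1}\int_{I_{2\pi}^2}S_{\delta,\Phi}(\boldsymbol{\vartheta})\,d\boldsymbol{\vartheta}\leq\frac{M}{2\pi}\,e^{-\delta\|\mathbf{k}\|_1},
\end{equation*}
where the last step is \eqref{eq_four_decay_general0}. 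Since $\delta\in(0,r)$ and $\mathbf{k},\mathbf{l}$ are arbitrary, this is the claim.

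The one step requiring care is the legitimacy of Fubini's theorem, i.e. the integrability of $\Phi$ on $I_{2\pi}^2\otimes I_{2\pi}^2$. For this I would first record an a priori pointwise bound on $\Phi$: for each fixed $\boldsymbol{\vartheta}$ with $\zeta(\boldsymbol{\vartheta})\neq0$, the substitution $\mu(z)=e^{iz}$ turns $\Phi(\cdot,\boldsymbol{\vartheta})$ into the function $F_{\boldsymbol{\vartheta}}$ of Lemma~\ref{bivarite_four_coe}, holomorphic on the polyannulus $\mathbb{V}(r)^2$, and an iterated application of the one-variable maximum modulus principle over the closed polyannulus $\overline{\mathbb{V}(\delta)}^2$ shows that $|F_{\boldsymbol{\vartheta}}|$ attains its supremum there on the distinguished boundary, which corresponds in the $z$-variables to $\{|\mathrm{Im}(z_0)|=|\mathrm{Im}(z_1)|=\delta\}$. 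As the real torus $I_{2\pi}^2$ lies inside $\overline{\mathbb{V}(\delta)}^2$ under this substitution, we obtain $|\Phi(\boldsymbol{\theta},\boldsymbol{\vartheta})|\leq\tfrac{1}{2\pi}S_{\delta,\Phi}(\boldsymbol{\vartheta})$ for all $\boldsymbol{\theta}\in\mathbb{R}^2$. Together with \eqref{eq_four_decay_general0} this bounds $\int_{I_{2\pi}^2}\int_{I_{2\pi}^2}|\Phi|\,d\boldsymbol{\theta}\,d\boldsymbol{\vartheta}$ by $2\pi M<\infty$; joint measurability of $\Phi$ is routine given the continuity of $\Phi(\cdot,\boldsymbol{\vartheta})$ for each $\boldsymbol{\vartheta}$ together with the standing assumptions, so $\Phi\, e_{-\mathbf{k}}\, e_{-\mathbf{l}}\in L_1(I_{2\pi}^2\otimes I_{2\pi}^2)$ and Fubini applies.

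Thus the main obstacle is exactly this measure-theoretic bookkeeping behind Fubini: once the domination by the integrable majorant $\tfrac{1}{2\pi}S_{\delta,\Phi}(\boldsymbol{\vartheta})$ is established, the estimate itself is a one-line computation. I would also emphasise that the decay occurs only in $\|\mathbf{k}\|_1$, the index $\mathbf{l}$ of the sheared variable playing no role --- precisely the anisotropic behaviour exploited later through the identity $\hat{G}_{\mathbf{k},\mathbf{l}}=K_{\mathbf{k}-\mathbf{l},-\mathbf{l}}$ to explain the decay of $\mathbf{K}_N$ along directions parallel to the main diagonal.
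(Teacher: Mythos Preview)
Your proof is correct and follows exactly the paper's approach: write $\hat{\Phi}_{\mathbf{k},\mathbf{l}}$ as an iterated integral, bound the inner integral by Lemma~\ref{bivarite_four_coe}, use $|e_{-\mathbf{l}}|=\tfrac{1}{2\pi}$, and invoke \eqref{eq_four_decay_general0}. The paper's proof is terser and silently assumes Fubini; your added justification via the maximum modulus bound $|\Phi(\boldsymbol{\theta},\boldsymbol{\vartheta})|\leq\tfrac{1}{2\pi}S_{\delta,\Phi}(\boldsymbol{\vartheta})$ is a welcome point of rigor that the paper omits.
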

\begin{proof}
By the definitions of Fourier coefficients and Lemma \ref{bivarite_four_coe}, we know that for all $0<\delta<r$ and $\mathbf{k}, \mathbf{l}\in\mathbb{Z}^2$,
\begin{eqnarray*}
\left|\hat{\Phi}_{\mathbf{k},\mathbf{l}}\right|&=&\left|\int_{I_{2\pi}^2}\left(\int_{I_{2\pi}^2}\Phi( \boldsymbol{\theta}, \boldsymbol{\vartheta})e_{-\mathbf{k}}(\boldsymbol{\theta})d \boldsymbol{\theta}\right)e_{- \mathbf{l}}(\boldsymbol{\vartheta})d \boldsymbol{\vartheta}\right|\\
&\leq& \frac{1}{2\pi}\int_{I_{2\pi}^2}  \left|\widehat{ (\Phi(\cdot,\boldsymbol{\vartheta}))}_{\mathbf{k}} \right|d \boldsymbol{\vartheta}\leq \frac{e^{-\delta\|\mathbf{k}\|_1}}{2\pi}\int_{I_{2\pi}^2} S_{\delta,\Phi}(\boldsymbol{\vartheta})d \boldsymbol{\vartheta}.
\end{eqnarray*}
Then, by substituting \eqref{eq_four_decay_general0} into the above inequality, we obtain the result of this theorem.
\end{proof}

We have demonstrated that the conditions specified in Theorem \ref{four_decay_general} lead to the exponential decay property of Fourier coefficients for real-valued functions defined on $\mathbb{R}^2\otimes I_{2\pi}^2$.
In the next subsection, we turn our attention to verifying that the function $G$ satisfies these conditions, and show the decay pattern of ${K}_{\mathbf{k},\mathbf{l}}$.

\subsection{Decay pattern of ${K}_{\mathbf{k},\mathbf{l}}$}
Define the distance function $d(\boldsymbol{\theta}, \boldsymbol{\vartheta}):=\|\Gamma(\boldsymbol{\theta})-\Gamma(\boldsymbol{\theta}+\boldsymbol{\vartheta})\|^2_2$ for all $\boldsymbol{\theta},\boldsymbol{\vartheta}\in I^2_{2\pi}$.
Combining \eqref{kernel_K_specific} and the definition of $G$, we can rewrite $G$ into the form as
\begin{equation}\label{eq_Gd}
	G(\boldsymbol{\theta}, \boldsymbol{\vartheta})= -\frac{1}{2\pi}\frac{(\Gamma( \boldsymbol{\theta})-\Gamma( \boldsymbol{\theta}+\boldsymbol{\vartheta}))\cdot \left(\frac{\partial\Gamma}{\partial \theta_0}\times\frac{\partial\Gamma}{\partial \theta_1}\right)(\boldsymbol{\theta}+\boldsymbol{\vartheta})}{d^{3/2}(\boldsymbol{\theta},\boldsymbol{\vartheta})}.
\end{equation}
This formulation clearly indicates that for a fixed $\boldsymbol{\vartheta}$, where the elements of $\Gamma$ are analytic and the analytic continuation of the function $d(\cdot, \boldsymbol{\vartheta})$ is non-zero, the function $G(\cdot, \boldsymbol{\vartheta})$ is holomorphic.
Consequently, to determine the region for the analytic continuation of $G(\cdot, \boldsymbol{\vartheta})$, we investigate regions in $\mathbb{C}^2$ where the continuation of $d(\cdot, \boldsymbol{\vartheta})$ remains non-zero.

To achieve this aim, we introduce an additional assumption regarding the boundary $\Gamma$, and define some notations. For a given region $B\subset\mathbb{C}^2$, the closure of $B$ is denoted by $\overline{B}$. Let $R_0>0$.
We assume that
	
\noindent	\textbf{(A2)} For all $j\in\mathbb{Z}_3$, the periodic extension of function $\gamma_j$ can be analytic extended to $\overline{W(R_0)^2}$.

The complex derivative of $ \Gamma $ is denoted by $D\Gamma$. For all $\varepsilon>0$, there exists a $ \delta>0 $ such that for all $ \mathbf{z}\in\mathbb{C}^2 $ and $ \mathbf{a}\in\mathbb{C}^2 $ with $ \|\mathbf{a}\|_2<\delta$,
$\| \Gamma(\mathbf{z}+\mathbf{a})-\Gamma(\mathbf{z})-(D\Gamma)(\mathbf{z})\mathbf{a} \|_2 \le \varepsilon\|\mathbf{a}\|_2$ holds (see Definition 1.22 in \cite{Scheidemann2023Complex} for details).
For an $m\times m$ matrix $\mathbf{B}$, we let $ \|\mathbf{B}\|_2:=\max\{\|\mathbf{B}\mathbf{z}\|_2:\|\mathbf{z}\|_2=1\} $.
For each $j\in\mathbb{Z}_3$, let $\mathbf{H}_j(\mathbf{z}):=(D(D\gamma_j(\mathbf{z}))^T)^T$. Define $ M_{0,j}$ and $ M_{1,j}$ as $\sup\left\{\|D\gamma_j(\mathbf{z})+D\gamma_j(\mathrm{Re}(\mathbf{z}))\|_2:\mathbf{z}\in \mathbb{W}(R_0)^2\right\}$ and $\sup\left\{\|\mathbf{H}_j(\mathbf{z})\|_2:\mathbf{z}\in \mathbb{W}(R_0)^2\right\}$, respectively.
We also define $ M_{0}:=\sqrt{\sum_{j\in\mathbb{Z}_3}M_{0,j}^2}$ and $ M_{1}:=\sqrt{\sum_{j\in\mathbb{Z}_3}M_{1,j}^2}$.
Let $b$ be a real number satisfying
\begin{equation}\label{eq_b}
0 < b < \min\left\{\frac{C_0^2}{\sqrt{2}M_0M_1},\frac{R_0}{2}\right\},
\end{equation}
and define $b^*:=\frac{1}{2}\left(b+\min\left\{\frac{C_0^2}{\sqrt{2}M_0M_1},\frac{R_0}{2}\right\}\right)$ and $ C_1:=C_0^2-\sqrt{2}M_0M_1b^* $.
For all $\boldsymbol{\vartheta}\in I_{2\pi}^2$, let
$\check{\boldsymbol{\vartheta}} :=[\check{\vartheta}_0,\check{\vartheta}_1]$ with
\begin{equation*}
	\check{\vartheta}_{\iota}:=\begin{cases}
		\vartheta_{\iota},&\vartheta_{\iota} \in [0,\pi),\\
		\vartheta_{\iota}-2\pi, &\vartheta_{\iota} \in [\pi,2\pi),
	\end{cases}\quad {\rm for~}\iota\in\mathbb{Z}_2.
\end{equation*}

\begin{lemma}\label{tilde_d_lwrbnd_Su}
If assumptions \textup{\textbf{(A1)}} and \textup{\textbf{(A2)}} hold, then  for all $\mathbf{z}\in \mathbb{W}(b^*)^2$ and $\boldsymbol{\vartheta}\in I_{2\pi}^2$, the following inequality holds
\begin{equation}\label{lemma_7_tilde_d_cplx_lwbnd_Su}
\left\vert d (\mathbf{z},\boldsymbol{\vartheta}) \right\vert \geq C_1(\zeta(\boldsymbol{\vartheta}))^2.
\end{equation}
\end{lemma}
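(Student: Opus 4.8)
The plan is to estimate $d(\mathbf{z},\boldsymbol{\vartheta})$ from below by splitting into the ``near-diagonal'' regime, where $\boldsymbol{\vartheta}$ is small and a first-order Taylor expansion of $\Gamma$ controls everything, and the ``far'' regime, where $\zeta(\boldsymbol{\vartheta})$ is bounded below and one perturbs off the real axis. First I would write, for $\mathbf{z}=\mathbf{x}+i\mathbf{y}\in\mathbb{W}(b^*)^2$ with $\mathbf{x}=\mathrm{Re}(\mathbf{z})$, the decomposition
\begin{equation*}
\Gamma(\mathbf{z}+\check{\boldsymbol{\vartheta}})-\Gamma(\mathbf{z})=\bigl(\Gamma(\mathbf{x}+\check{\boldsymbol{\vartheta}})-\Gamma(\mathbf{x})\bigr)+\bigl(\Gamma(\mathbf{z}+\check{\boldsymbol{\vartheta}})-\Gamma(\mathbf{z})-\Gamma(\mathbf{x}+\check{\boldsymbol{\vartheta}})+\Gamma(\mathbf{x})\bigr),
\end{equation*}
and treat the first bracket as the ``main term'' (it is a real vector whose squared norm is $d(\mathbf{x},\boldsymbol{\vartheta})=\|\Gamma(\mathbf{x})-\Gamma(\mathbf{x}+\check{\boldsymbol{\vartheta}})\|_2^2\ge C_0^2\zeta(\boldsymbol{\vartheta})^2$ by \textbf{(A1)}, using $\zeta(\boldsymbol{\vartheta})=\|\check{\boldsymbol{\vartheta}}\|_2$ and periodicity), and the second bracket as a ``remainder'' to be bounded by something like $\sqrt{2}\,M_0M_1\,b^*\,\zeta(\boldsymbol{\vartheta})$.

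To bound the remainder I would use the integral/mean-value form of the difference along the segment from $\mathbf{x}$ to $\mathbf{z}$ (a purely imaginary displacement $i\mathbf{y}$, $\|\mathbf{y}\|_\infty\le b^*$): the remainder equals $\int_0^1\bigl[(D\Gamma)(\mathbf{x}+it\mathbf{y}+\check{\boldsymbol{\vartheta}})-(D\Gamma)(\mathbf{x}+it\mathbf{y})\bigr](i\mathbf{y})\,dt$, and then each entry of the matrix difference $(D\gamma_j)(\cdot+\check{\boldsymbol{\vartheta}})-(D\gamma_j)(\cdot)$ is itself $\int_0^1\mathbf{H}_j(\cdot+s\check{\boldsymbol{\vartheta}})\check{\boldsymbol{\vartheta}}\,ds$, giving a factor $M_{1,j}\|\check{\boldsymbol{\vartheta}}\|_2$. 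Assembling the components via the $M_1=\sqrt{\sum_j M_{1,j}^2}$ definition, and controlling $\|i\mathbf{y}\|_2\le\sqrt{2}\,b^*$ (here the $\sqrt2$ from the two complex coordinates and $M_0$ from the gradient-type bound enter — I would double-check whether $M_0$ is genuinely needed or whether a cleaner constant suffices; the paper's constant $C_1=C_0^2-\sqrt2 M_0M_1 b^*$ suggests the bookkeeping should land exactly there), I obtain $\|\text{remainder}\|_2\le\sqrt2\,M_0M_1\,b^*\,\zeta(\boldsymbol{\vartheta})$, where the $b^*<\min\{C_0^2/(\sqrt2 M_0M_1),R_0/2\}$ guarantees this is strictly smaller than $C_0\zeta(\boldsymbol{\vartheta})$ and also keeps $\mathbf{z}$ inside the domain $\overline{\mathbb{W}(R_0)^2}$ of analyticity from \textbf{(A2)}.

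Then I would finish by a reverse-triangle-type estimate for the complex squared norm. Since $d(\mathbf{z},\boldsymbol{\vartheta})=\sum_j\bigl(\gamma_j(\mathbf{z})-\gamma_j(\mathbf{z}+\check{\boldsymbol{\vartheta}})\bigr)^2$ is a sum of squares of complex numbers $w_j=a_j+r_j$ with $a_j$ real (the main term) and $|r_j|$ small (the remainder component), I would write $w_j^2=a_j^2+2a_jr_j+r_j^2$ and sum, so that $|d(\mathbf{z},\boldsymbol{\vartheta})|\ge \sum_j a_j^2-2\bigl(\sum_j a_j^2\bigr)^{1/2}\bigl(\sum_j|r_j|^2\bigr)^{1/2}-\sum_j|r_j|^2=\|\mathbf{a}\|_2^2-2\|\mathbf{a}\|_2\|\mathbf{r}\|_2-\|\mathbf{r}\|_2^2\ge(\|\mathbf{a}\|_2-\|\mathbf{r}\|_2)^2$ if $\|\mathbf{a}\|_2\ge\|\mathbf{r}\|_2$; more carefully, $|d|\ge\|\mathbf{a}\|_2^2-2\|\mathbf{a}\|_2\|\mathbf{r}\|_2\ge\|\mathbf{a}\|_2(\|\mathbf{a}\|_2-2\|\mathbf{r}\|_2)$, and plugging in $\|\mathbf{a}\|_2^2=d(\mathbf{x},\boldsymbol{\vartheta})\ge C_0^2\zeta(\boldsymbol{\vartheta})^2$ together with the remainder bound yields $|d(\mathbf{z},\boldsymbol{\vartheta})|\ge(C_0^2-\sqrt2 M_0M_1 b^*)\zeta(\boldsymbol{\vartheta})^2=C_1\zeta(\boldsymbol{\vartheta})^2$. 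The main obstacle I anticipate is the careful tracking of constants through the matrix-norm estimates on $D\Gamma$ and $\mathbf{H}_j$ — in particular making the factor-of-$\sqrt2$ bookkeeping over the two complex coordinates and the role of $M_0$ versus $C_0$ match the stated $C_1$ exactly — together with justifying termwise differentiation/integration of the analytic continuation inside $\overline{\mathbb{W}(R_0)^2}$, which is where \textbf{(A2)} and the bound $b^*<R_0/2$ do their work.
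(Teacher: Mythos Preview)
Your overall strategy---compare $d(\mathbf{z},\boldsymbol{\vartheta})$ to its real-axis value $d(\mathbf{z}_{\mathrm R},\boldsymbol{\vartheta})$, invoke \textbf{(A1)} for the latter, and bound the perturbation via mean-value/Hessian estimates under \textbf{(A2)}---is exactly the paper's. The near/far split you open with is not needed; the paper treats all $\boldsymbol{\vartheta}$ uniformly. The execution, however, differs in a way that costs you the constant $C_1$.

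The paper does \emph{not} decompose $\Gamma(\mathbf{z}+\check{\boldsymbol{\vartheta}})-\Gamma(\mathbf{z})$ and then square. It bounds $d(\mathbf{z},\boldsymbol{\vartheta})-d(\mathbf{z}_{\mathrm R},\boldsymbol{\vartheta})$ directly, writing for each $j$
\[
A_j^2-B_j^2=(A_j+B_j)(A_j-B_j),\qquad A_j:=\gamma_j(\mathbf{z}+\boldsymbol{\vartheta})-\gamma_j(\mathbf{z}),\quad B_j:=\gamma_j(\mathbf{z}_{\mathrm R}+\boldsymbol{\vartheta})-\gamma_j(\mathbf{z}_{\mathrm R}).
\]
The constant $M_{0,j}=\sup\|D\gamma_j(\mathbf{z})+D\gamma_j(\mathrm{Re}\,\mathbf{z})\|_2$ is tailored precisely so that one mean-value step gives $|A_j+B_j|\le M_{0,j}\,\zeta(\boldsymbol{\vartheta})$; a second application (through $\mathbf H_j$) gives $|A_j-B_j|\le M_{1,j}\|\mathbf{z}_{\mathrm I}\|_2\,\zeta(\boldsymbol{\vartheta})$. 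Summing, using Cauchy--Schwarz on $\sum_j M_{0,j}M_{1,j}$, and $\|\mathbf{z}_{\mathrm I}\|_2\le\sqrt2\,b^*$ yields $|d(\mathbf{z},\cdot)-d(\mathbf{z}_{\mathrm R},\cdot)|\le\sqrt2\,M_0M_1b^*\zeta^2$, and the triangle inequality with $d(\mathbf{z}_{\mathrm R},\cdot)\ge C_0^2\zeta^2$ lands exactly on $C_1\zeta^2$.

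Your expansion $w_j^2=a_j^2+2a_jr_j+r_j^2$ is the same algebra rewritten as $A_j^2-B_j^2=2B_jR_j+R_j^2$ with $R_j=A_j-B_j$, so you pick up an extra $\sum_j|r_j|^2$ that the $(A+B)(A-B)$ route avoids. Your line ``more carefully, $|d|\ge\|\mathbf a\|_2^2-2\|\mathbf a\|_2\|\mathbf r\|_2$'' drops that term on the wrong side of the inequality. Also, $M_0$ does not belong in your bound for $\|\mathbf r\|_2$ (that bound is $\le\sqrt2\,M_1b^*\zeta$, no $M_0$); along your route $M_0$ enters instead through an \emph{upper} bound $\|\mathbf a\|_2\le\tfrac12M_0\zeta$ on the real main term, needed to control the cross term $2\|\mathbf a\|_2\|\mathbf r\|_2$. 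With these fixes your argument gives only
\[
|d(\mathbf{z},\boldsymbol{\vartheta})|\ \ge\ \bigl(C_0^2-\sqrt2\,M_0M_1b^*-2M_1^2(b^*)^2\bigr)\zeta(\boldsymbol{\vartheta})^2,
\]
which is strictly weaker than the stated $C_1\zeta(\boldsymbol{\vartheta})^2$. To recover $C_1$ exactly, switch to the paper's $(A_j+B_j)(A_j-B_j)$ factorisation and use the purpose-built $M_{0,j}$.
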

\begin{proof}
Let $\mathbf{z}:=[z_0,z_1]\in \mathbb{W}(b^*)^2$ and $\boldsymbol{\vartheta}:=[\vartheta_0,\vartheta_1]\in I_{2\pi}^2$. Denote $\mathbf{z}_{\mathrm{R}}:=[\mathrm{Re}(z_0),\mathrm{Re}(z_1)]$. Applying the triangle inequality yields
\begin{equation}\label{lemmaSu2_tri_ineq}
	\left\vert d(\mathbf{z},\boldsymbol{\vartheta}) \right\vert
	\geq \left\vert d(\mathbf{z}_{\mathrm{R}},\boldsymbol{\vartheta})\right\vert - \left\vert d(\mathbf{z},\boldsymbol{\vartheta})-d(\mathbf{z}_{\mathrm{R}},\boldsymbol{\vartheta})\right\vert,
\end{equation}
where
\begin{eqnarray}\label{lemmaSu2_total}
\nonumber |d(\mathbf{z},\boldsymbol{\vartheta})-d(\mathbf{z}_{\mathrm{R}},\boldsymbol{\vartheta})&=&\left|\sum_{j\in\mathbb{Z}_3}\left(\gamma_j(\mathbf{z}+\boldsymbol{\vartheta})-\gamma_j(\mathbf{z})\right)^2-\sum_{j\in\mathbb{Z}_3}
\left(\gamma_j(\mathbf{z}_{\mathrm{R}}+\boldsymbol{\vartheta})-\gamma_j(\mathbf{z}_{\mathrm{R}})\right)^2\right|\\
&\le&\sum_{j\in\mathbb{Z}_3}\left|\gamma_j(\mathbf{z}+\boldsymbol{\vartheta})+\gamma_j(\mathbf{z}_{\mathrm{R}}+\boldsymbol{\vartheta})-\gamma_j(\mathbf{z})-\gamma_j(\mathbf{z}_{\mathrm{R}}) \right|
 | \gamma_j(\mathbf{z}+\boldsymbol{\vartheta})\\\nonumber&-&\gamma_j(\mathbf{z}_{\mathrm{R}}+\boldsymbol{\vartheta})-\gamma_j(\mathbf{z})+\gamma_j(\mathbf{z}_{\mathrm{R}}) |.			
\end{eqnarray}
		
We now discuss the terms of the right hand side in \eqref{lemmaSu2_total}. Let $j\in\mathbb{Z}_3$.
The periodicity of $\gamma_j$ ensures that $\gamma_j(\mathbf{z}_{\mathrm{R}}+\boldsymbol{\vartheta})=\gamma_j(\mathbf{z}_{\mathrm{R}}+\check{\boldsymbol{\vartheta}})$. Consequently, there is
\begin{equation*}
\gamma_j(\mathbf{z}+\boldsymbol{\vartheta})+\gamma_j(\mathbf{z}_{\mathrm{R}}+\boldsymbol{\vartheta})-\gamma_j(\mathbf{z})-\gamma_j(\mathbf{z}_{\mathrm{R}})
=(\gamma_j(\mathbf{z}+\check{\boldsymbol{\vartheta}})+\gamma_j(\mathbf{z}_{\mathrm{R}}+\check{\boldsymbol{\vartheta}}))-(\gamma_j(\mathbf{z})+\gamma_j(\mathbf{z}_{\mathrm{R}})).	
\end{equation*}
Then, utilizing the differential inequality (refer to \cite{mcleod1965mean}) leads to
$$
|\gamma_j(\mathbf{z}+\boldsymbol{\vartheta})+\gamma_j(\mathbf{z}_{\mathrm{R}}+\boldsymbol{\vartheta})-\gamma_j(\mathbf{z})-\gamma_j(\mathbf{z}_{\mathrm{R}})| \le \left\|D\gamma_j(\mathbf{z}+\lambda_{0,j}\check{\boldsymbol{\vartheta}}) +D\gamma_j(\mathbf{z}_{\mathrm{R}}+\lambda_{0,j}\check{\boldsymbol{\vartheta}})\right\|_2\left\|\check{\boldsymbol{\vartheta}}\right\|_2,
$$
where $ \lambda_{0,j}\in [0,1]$.
From the definition of $M_{0,j}$, it follows that
\begin{equation}\label{lemmaSu2_First_ineq}
\left|(\gamma_j(\mathbf{z}+\boldsymbol{\vartheta})+\gamma_j(\mathbf{z}_{\mathrm{R}}+\boldsymbol{\vartheta}))-(\gamma_j(\mathbf{z})+\gamma_j(\mathbf{z}_{\mathrm{R}})) \right|\le M_{0,j}\left\|\check{\boldsymbol{\vartheta}}\right\|_2.
\end{equation}
Define $\mathbf{z}_{\mathrm{I}}:=[\mathrm{Im}(z_0),\mathrm{Im}(z_1)]$, implying that $ \mathbf{z}=\mathbf{z}_{\mathrm{R}}+i\mathbf{z}_{\mathrm{I}}$ and $\mathbf{z}_{\mathrm{I}}\in (-b^*,b^*)^2$. Then, with the periodicity of $\gamma_j$, we have that
$$
\gamma_j(\mathbf{z}+\boldsymbol{\vartheta})-\gamma_j(\mathbf{z}_{\mathrm{R}}+\boldsymbol{\vartheta})-\gamma_j(\mathbf{z})+\gamma_j(\mathbf{z}_{\mathrm{R}})
=(\gamma_j(\mathbf{z}_{\mathrm{R}}+i\mathbf{z}_{\mathrm{I}}+\check{\boldsymbol{\vartheta}})-\gamma_j(\mathbf{z}_{\mathrm{R}}+\check{\boldsymbol{\vartheta}}))
-(\gamma_j(\mathbf{z}_{\mathrm{R}}+i\mathbf{z}_{\mathrm{I}})-\gamma_j(\mathbf{z}_{\mathrm{R}})).
$$
Applying the differential inequality to this equality yields
\begin{eqnarray*}
\left|\gamma_j(\mathbf{z}+\boldsymbol{\vartheta})-\gamma_j(\mathbf{z}_{\mathrm{R}}+\boldsymbol{\vartheta})-\gamma_j(\mathbf{z})+\gamma_j(\mathbf{z}_{\mathrm{R}})\right|
& \le &\left\| D\gamma_j(\mathbf{z}+\lambda_{1,j}\check{\boldsymbol{\vartheta}})-D\gamma_j(\mathbf{z}_{\mathrm{R}}+\lambda_{1,j}\check{\boldsymbol{\vartheta}})\right\|_2 \left\|\check{\boldsymbol{\vartheta}}\right\|_2\\
&\le& \left\|\mathbf{z}_{\mathrm{I}}\right\|_2 \left\|\mathbf{H}_j(\mathbf{z}_{\mathrm{R}}+\lambda_{1,j}^{\mathrm{R}}\boldsymbol{\vartheta}+i\lambda_{1,j}^{\mathrm{I}}\mathbf{z}_{\mathrm{I}})\right\|_2\left\|\check{\boldsymbol{\vartheta}}\right\|_2,
\end{eqnarray*}
where $ \lambda_{1,j}^{\mathrm{R}},\lambda_{1,j}^{\mathrm{I}}\in [0,1] $.
Thus, according to the definition of $M_{1,j}$, we obtain that
\begin{equation}\label{lemmaSu2_Second_ineq}
\left|(\gamma_j(\mathbf{z}+\boldsymbol{\vartheta})-\gamma_j(\mathbf{z}_{\mathrm{R}}+\boldsymbol{\vartheta}))-(\gamma_j(\mathbf{z})-\gamma_j(\mathbf{z}_{\mathrm{R}}))\right|\le M_{1,j}\|\mathbf{z}_{\mathrm{I}}\|_2 \left\|\check{\boldsymbol{\vartheta}}\right\|_2.
\end{equation}

Note that $ \left\|\check{\boldsymbol{\vartheta}}\right\|_2=\zeta(\boldsymbol{\vartheta}) $. Substituting \eqref{lemmaSu2_First_ineq} and \eqref{lemmaSu2_Second_ineq} into  \eqref{lemmaSu2_total}, and considering the definitions of $M_0$ and $M_1$, we derive
\begin{equation}\label{lemmaSu2_total_ineq}
\left|d(\mathbf{z},\boldsymbol{\vartheta})-d(\mathbf{z}_{\mathrm{R}},\boldsymbol{\vartheta})\right|\le(\zeta(\boldsymbol{\vartheta}))^2\|\mathbf{z}_{\mathrm{I}}\|_2\sum_{j\in\mathbb{Z}_3}M_{0,j}M_{1,j}\le M_0M_1(\zeta(\boldsymbol{\vartheta}))^2\|\mathbf{z}_{\mathrm{I}}\|_2.
\end{equation}
Meanwhile, assumption \textup{\textbf{(A1)}} and the periodicity of $d$ imply
\begin{equation}\label{lemmaSu2_assump_a1_d}
d(\mathbf{z}_{\mathrm{R}},\boldsymbol{\vartheta}) \geq C_0^2  (\zeta(\boldsymbol{\vartheta}))^2.
\end{equation}
Substituting \eqref{lemmaSu2_total_ineq} and \eqref{lemmaSu2_assump_a1_d} into the right hand side of \eqref{lemmaSu2_tri_ineq} yields
\begin{equation}\label{lemmaSu2_assump_a1_d2}
\left\vert d(\mathbf{z},\boldsymbol{\vartheta}) \right\vert\geq \left(C_0^2-M_0M_1\|\mathbf{z}_{\mathrm{I}}\|_2\right)(\zeta(\boldsymbol{\vartheta}))^2.
\end{equation}
Note that for all $\mathbf{z}\in \mathbb{W}(b^*)^2$, there is $ \|\mathbf{z}_{\mathrm{I}}\|_2\le\sqrt{2}b^* $.
Thus, according to the definitions of $ b^* $ and $ C_1 $, for all $ \mathbf{z}\in \mathbb{W}(b^*)^2 $, there holds
$
C_0^2-M_0M_1\|\mathbf{z}_{\mathrm{I}}\|_2 \ge C_0^2-\sqrt{2}M_0M_1b^*=C_1$.
Therefore, from \eqref{lemmaSu2_assump_a1_d2}, we obtain our desired result \eqref{lemma_7_tilde_d_cplx_lwbnd_Su}.
\end{proof}
	
To verify that the function $G$ meets the conditions in Theorem \ref{four_decay_general}, it remains to establish a bound for the analytic continuation of the numerator in the definition of $G$ given in \eqref{eq_Gd}. To facilitate this, we introduce the following notations. Denote the partial derivatives of $\Gamma$ in the complex domain $\mathbb{W}(b^*)^2$ as $\frac{\partial\Gamma}{\partial z_\iota}$, $\iota\in\mathbb{Z}_2$. Specifically, $\frac{\partial\Gamma}{\partial z_\iota}$ is defined as
$\frac{\partial\Gamma}{\partial z_\iota}:=\left[\frac{\partial\gamma_0}{\partial z_\iota}, \frac{\partial\gamma_1}{\partial z_\iota}, \frac{\partial\gamma_2}{\partial z_\iota}\right]^T $. It can be check that $D\Gamma=\left[\frac{\partial \Gamma}{\partial z_0},\frac{\partial \Gamma}{\partial z_1}\right]$ (see Exercise 1.47, problem 2 in \cite{Scheidemann2023Complex}).
Let $ \mathbb{Z}_+ := \{0,1,2,\cdots\}$. For each $ \mathbf{k}:=[k_0,k_1]\in\mathbb{Z}_+^2 $, we define $ \mathbf{k}+1:=[k_0+1,k_1+1]$ and $ \mathbf{k}!:=k_0!k_1! $. For any $ \mathbf{z}\in\mathbb{C}^2 $ and $ \mathbf{k}\in\mathbb{Z}_+^2 $, we specify $ \mathbf{z}^{\mathbf{k}}:=z_0^{k_0}z_1^{k_1} $ and $ \frac{\partial^{\mathbf{k}}}{\partial \mathbf{z}^\mathbf{k}}:=\frac{\partial^{\|\mathbf{k}\|_1}}{\partial z_0^{k_0}\partial z_1^{k_1}} $.

\begin{lemma}\label{lemma_upper_numer_G}
If assumptions \textup{\textbf{(A1)}} and \textup{\textbf{(A2)}} hold, then there exists a positive constant $c$ such that for all $\mathbf{z}\in \mathbb{W}(b^*)^2$ and $\boldsymbol{\vartheta}\in I_{2\pi}^2$,
\begin{equation}\label{Gamma_ineq}
\left|(\Gamma(\mathbf{z})-\Gamma(\mathbf{z}+\boldsymbol{\vartheta}))\cdot \left(\dfrac{\partial\Gamma}{\partial z_0}\times\dfrac{\partial\Gamma}{\partial z_1}\right)(\mathbf{z}+\boldsymbol{\vartheta})\right|\leq c (\zeta(\boldsymbol{\vartheta}))^2.
\end{equation}
\end{lemma}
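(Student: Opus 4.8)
The plan is to bound the two factors on the left-hand side of \eqref{Gamma_ineq} separately: the ``difference factor'' $\Gamma(\mathbf{z})-\Gamma(\mathbf{z}+\boldsymbol{\vartheta})$, which should carry a factor of $\zeta(\boldsymbol{\vartheta})$ (i.e.\ $\|\check{\boldsymbol{\vartheta}}\|_2$), and the ``cross-product factor'' $\left(\frac{\partial\Gamma}{\partial z_0}\times\frac{\partial\Gamma}{\partial z_1}\right)(\mathbf{z}+\boldsymbol{\vartheta})$, which should be uniformly bounded on the relevant complex strip. Combining a linear-in-$\zeta(\boldsymbol{\vartheta})$ bound on the first factor with a uniform bound on the second already gives an upper bound of the form $c\,\zeta(\boldsymbol{\vartheta})$, which is not quite \eqref{Gamma_ineq}: we need $(\zeta(\boldsymbol{\vartheta}))^2$. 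So the key extra observation is that the difference factor is \emph{not merely} $O(\zeta(\boldsymbol{\vartheta}))$ but is in fact \emph{orthogonal} (in the bilinear sense $\mathbf{a}\cdot\mathbf{b}$) to the cross product up to an error of order $\zeta(\boldsymbol{\vartheta})$; equivalently, the leading linear term of $\Gamma(\mathbf{z}+\boldsymbol{\vartheta})-\Gamma(\mathbf{z})$ in $\check{\boldsymbol{\vartheta}}$ is a combination of $\frac{\partial\Gamma}{\partial z_0}$ and $\frac{\partial\Gamma}{\partial z_1}$ evaluated near $\mathbf{z}+\boldsymbol{\vartheta}$, and hence is annihilated (again up to an $O(\zeta)$ correction) by the triple product with $\frac{\partial\Gamma}{\partial z_0}\times\frac{\partial\Gamma}{\partial z_1}$.

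Concretely, I would proceed as follows. First, using periodicity as in the proof of Lemma~\ref{tilde_d_lwrbnd_Su}, replace $\boldsymbol{\vartheta}$ by $\check{\boldsymbol{\vartheta}}$ wherever it appears inside $\gamma_j$, so that $\|\check{\boldsymbol{\vartheta}}\|_2=\zeta(\boldsymbol{\vartheta})$ is the small quantity. Second, apply a first-order Taylor expansion with integral remainder (or the differential inequality of \cite{mcleod1965mean}) to each $\gamma_j$ at the base point $\mathbf{z}+\boldsymbol{\vartheta}$, writing
\[
\Gamma(\mathbf{z})-\Gamma(\mathbf{z}+\boldsymbol{\vartheta}) = -(D\Gamma)(\mathbf{z}+\boldsymbol{\vartheta})\,\check{\boldsymbol{\vartheta}} + \mathbf{R}(\mathbf{z},\boldsymbol{\vartheta}),
\]
where the remainder satisfies $\|\mathbf{R}(\mathbf{z},\boldsymbol{\vartheta})\|_2 \le \frac{1}{2}\big(\sup_{\mathbb{W}(b^*)^2}\|\mathbf{H}_j\|_2\text{-type bound}\big)\|\check{\boldsymbol{\vartheta}}\|_2^2 \le c_1(\zeta(\boldsymbol{\vartheta}))^2$, the constant $c_1$ coming from $M_1$ and assumption \textbf{(A2)} (analyticity on the closed polydisc $\overline{W(R_0)^2}$ gives uniform bounds on first and second derivatives over the smaller strip $\mathbb{W}(b^*)^2$, since $b^*<R_0/2$). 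Third, recall that since $D\Gamma=\left[\frac{\partial\Gamma}{\partial z_0},\frac{\partial\Gamma}{\partial z_1}\right]$, the vector $(D\Gamma)(\mathbf{z}+\boldsymbol{\vartheta})\,\check{\boldsymbol{\vartheta}} = \check{\vartheta}_0\frac{\partial\Gamma}{\partial z_0}(\mathbf{z}+\boldsymbol{\vartheta}) + \check{\vartheta}_1\frac{\partial\Gamma}{\partial z_1}(\mathbf{z}+\boldsymbol{\vartheta})$ lies in the span of $\frac{\partial\Gamma}{\partial z_0}(\mathbf{z}+\boldsymbol{\vartheta})$ and $\frac{\partial\Gamma}{\partial z_1}(\mathbf{z}+\boldsymbol{\vartheta})$, hence its dot product with $\left(\frac{\partial\Gamma}{\partial z_0}\times\frac{\partial\Gamma}{\partial z_1}\right)(\mathbf{z}+\boldsymbol{\vartheta})$ is exactly zero (this scalar-triple-product identity holds over $\mathbb{C}$ purely algebraically, since $\mathbf{a}\cdot(\mathbf{a}\times\mathbf{b})=\mathbf{b}\cdot(\mathbf{a}\times\mathbf{b})=0$ is a polynomial identity). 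Therefore only the remainder contributes:
\[
\left|(\Gamma(\mathbf{z})-\Gamma(\mathbf{z}+\boldsymbol{\vartheta}))\cdot \left(\tfrac{\partial\Gamma}{\partial z_0}\times\tfrac{\partial\Gamma}{\partial z_1}\right)(\mathbf{z}+\boldsymbol{\vartheta})\right| = \left|\mathbf{R}(\mathbf{z},\boldsymbol{\vartheta})\cdot \left(\tfrac{\partial\Gamma}{\partial z_0}\times\tfrac{\partial\Gamma}{\partial z_1}\right)(\mathbf{z}+\boldsymbol{\vartheta})\right|.
\]
Fourth, bound the cross product uniformly: $\left\|\left(\frac{\partial\Gamma}{\partial z_0}\times\frac{\partial\Gamma}{\partial z_1}\right)(\mathbf{w})\right\|_2 \le c_2$ for all $\mathbf{w}$ with $\mathrm{Im}$ components in $(-b^*,b^*)$ — because $\frac{\partial\gamma_j}{\partial z_\iota}$ is holomorphic on $\overline{W(R_0)^2}$ and $b^*+\pi$ (the worst translate) stays inside a region where we control $\Gamma$; more carefully, $\mathbf{z}+\boldsymbol{\vartheta}$ need not lie in $\mathbb{W}(b^*)^2$ as a real-part set, but periodicity lets us reduce to $\mathbf{z}+\check{\boldsymbol{\vartheta}}\in \mathbb{W}(b^*)^2$, so the bound $c_2$ is finite by assumption \textbf{(A2)} and continuity on the compact closure. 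Finally, by Cauchy–Schwarz, $|\mathbf{R}\cdot(\cdots)| \le \|\mathbf{R}\|_2\,c_2 \le c_1 c_2 (\zeta(\boldsymbol{\vartheta}))^2$, giving \eqref{Gamma_ineq} with $c=c_1c_2$.

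The main obstacle — and the step requiring the most care — is the bookkeeping around periodicity and the choice of evaluation point for the Taylor expansion. One must expand $\Gamma(\mathbf{z})-\Gamma(\mathbf{z}+\boldsymbol{\vartheta})$ about $\mathbf{z}+\boldsymbol{\vartheta}$ (not about $\mathbf{z}$) so that the linear term is precisely $-(D\Gamma)(\mathbf{z}+\boldsymbol{\vartheta})\check{\boldsymbol{\vartheta}}$, matching the argument of the cross product and making the triple product vanish identically; and one must everywhere substitute $\boldsymbol{\vartheta}\mapsto\check{\boldsymbol{\vartheta}}$ before differentiating, so that the small parameter is genuinely $\zeta(\boldsymbol{\vartheta})$ and the second-derivative bounds are taken over a fixed compact set independent of $\boldsymbol{\vartheta}$. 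The analyticity hypothesis \textbf{(A2)} on the closed polydisc $\overline{W(R_0)^2}$, combined with $b^*<R_0/2$, is exactly what is needed to get the uniform bounds $M_1$-type on second derivatives and $c_2$ on the cross product over the strip $\mathbb{W}(b^*)^2$ and its $\pi$-translates; invoking it is routine but should be stated explicitly. No genuinely new idea beyond the scalar-triple-product cancellation is needed — that cancellation is the whole reason the bound is $(\zeta(\boldsymbol{\vartheta}))^2$ rather than $\zeta(\boldsymbol{\vartheta})$, which in turn is what will make $G$ (after dividing by $d^{3/2}\sim(\zeta(\boldsymbol{\vartheta}))^3$) only mildly singular, namely like $1/\zeta(\boldsymbol{\vartheta})$, hence integrable.
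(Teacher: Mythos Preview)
Your proposal is correct and rests on the same central idea as the paper: expand $\Gamma(\mathbf{z})$ about the point $\mathbf{z}+\boldsymbol{\vartheta}$ (after replacing $\boldsymbol{\vartheta}$ by $\check{\boldsymbol{\vartheta}}$ via periodicity), observe that the linear term $-(D\Gamma)(\mathbf{z}+\boldsymbol{\vartheta})\check{\boldsymbol{\vartheta}}$ is annihilated by the scalar triple product with $\frac{\partial\Gamma}{\partial z_0}\times\frac{\partial\Gamma}{\partial z_1}$ at the same point, and then bound the quadratic remainder times the uniformly bounded cross product.

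The one methodological difference is how the remainder is handled. You use a first-order Taylor expansion with integral (or mean-value) remainder, which requires only that the segment from $\mathbf{z}$ to $\mathbf{z}+\check{\boldsymbol{\vartheta}}$ stay in the strip $\mathbb{W}(b^*)^2$; since $\check{\boldsymbol{\vartheta}}$ is real this is automatic, and one uniform second-derivative bound suffices for all $\boldsymbol{\vartheta}\in I_{2\pi}^2$. The paper instead writes the full Taylor \emph{series} and bounds all higher coefficients by Cauchy's integral formula; because that series only converges for $\|\check{\boldsymbol{\vartheta}}\|_\infty$ small, the paper first splits off the region $\boldsymbol{\vartheta}\notin\tilde I_\beta^2$ (where $\zeta(\boldsymbol{\vartheta})\ge\beta R_0>0$ and the estimate is trivial) and carries out the series argument only on $\tilde I_\beta^2$. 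Your route is therefore slightly more economical---no region split is needed---while the paper's route gives explicit constants in terms of $M_\gamma$, $R_0$, and $\beta$. Either way the argument goes through.
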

\begin{proof}
Let $0<\beta<1$ such that $\beta R_0<\pi$. Define $\tilde{I}_\beta:=[0,\beta R_0)\cup(2\pi-\beta R_0, 2\pi)$.
Note that for $\mathbf{z}\in \mathbb{W}(b^*)^2$ and $\boldsymbol{\vartheta}\in I_{2\pi}^2\backslash \tilde{I}_{\beta}^2$, we have $0<\beta R_0\le \|\check{\boldsymbol{\vartheta}}\|_2=\zeta(\boldsymbol{\vartheta}) \le \pi$. It follows that
$$\frac{\left|(\Gamma(\mathbf{z})-\Gamma(\mathbf{z}+\boldsymbol{\vartheta}))\cdot \left(\frac{\partial\Gamma}{\partial z_0}\times\frac{\partial\Gamma}{\partial z_1}\right)(\mathbf{z}+\boldsymbol{\vartheta})\right|}{\zeta(\boldsymbol{\vartheta})^2} \leq \frac{\left|(\Gamma(\mathbf{z})-\Gamma(\mathbf{z}+{\boldsymbol{\vartheta}}))\cdot \left(\frac{\partial\Gamma}{\partial z_0}\times\frac{\partial\Gamma}{\partial z_1}\right)(\mathbf{z}+{\boldsymbol{\vartheta}})\right|}{(\beta R_0)^2}.$$
Hence,  from the continuity and differentiability of $\Gamma$, we know that there is a positive constant $c_0$ such that for all $\mathbf{z}\in \mathbb{W}(b^*)^2$ and $\boldsymbol{\vartheta}\in I_{2\pi}^2\backslash \tilde{I}_{\beta}^2$, inequality \eqref{Gamma_ineq} holds.

To prove this lemma, it remains to show that there exists a positive constant $c_2$ such that for all $\mathbf{z}\in \mathbb{W}(b^*)^2$ and $\boldsymbol{\vartheta}\in \tilde{I}_{\beta}^2$, inequality \eqref{Gamma_ineq} also holds. We begin by reformulating the left hand side of \eqref{Gamma_ineq}.
Since $b^*<R_0$, assumption \textup{\textbf{(A2)}} confirms that for each $j\in\mathbb{Z}_3$, the functions $\frac{\partial^{\mathbf{k}}\gamma_j}{\partial \mathbf{z}^\mathbf{k}}$, $\mathbf{k}\in\mathbb{Z}_+^2$, are $2\pi$ bi-periodic with respect to the real part over $\mathbb{W}(b^*)^2$. That is $\frac{\partial^{\mathbf{k}}\gamma_j}{\partial \mathbf{z}^\mathbf{k}}(\mathbf{z})=\frac{\partial^{\mathbf{k}}\gamma_j}{\partial \mathbf{z}^\mathbf{k}}(z_0+2\pi\iota_0,z_1+2\pi\iota_1)$ for all $\mathbf{z}:=[z_0,z_1]\in \mathbb{W}(b^*)^2$ and $\iota_0,\iota_1\in \mathbb{Z}$.
Therefore, by noting that $\mathbf{z}+\check{\boldsymbol{\vartheta}}\in \mathbb{W}(b^*)^2$ for all $\mathbf{z}\in \mathbb{W}(b^*)^2$ and $\boldsymbol{\vartheta}\in \tilde{I}_{\beta}^2$, utilizing Taylor's expansion
yields
\begin{equation*}
\gamma_j(\mathbf{z})=\sum_{\mathbf{k}\in\mathbb{Z}_+^2}\dfrac{1}{\mathbf{k}!}\frac{\partial^{\mathbf{k}}\gamma_j}{\partial \mathbf{z}^\mathbf{k}}(\mathbf{z}+\check{\boldsymbol{\vartheta}})\check{\boldsymbol{\vartheta}}^{\mathbf{k}}
=\sum_{\mathbf{k}\in\mathbb{Z}_+^2}\dfrac{1}{\mathbf{k}!}\frac{\partial^{\mathbf{k}}\gamma_j}{\partial \mathbf{z}^\mathbf{k}}(\mathbf{z}+\boldsymbol{\vartheta})\check{\boldsymbol{\vartheta}}^{\mathbf{k}}.
\end{equation*}
Furthermore, combining with the definitions of $\frac{\partial\Gamma}{\partial z_0}$ and $\frac{\partial\Gamma}{\partial z_1}$, we have that for all $\mathbf{z}\in \mathbb{W}(b^*)^2$ and $\boldsymbol{\vartheta}\in \tilde{I}_{\beta}^2$,
\begin{equation*}
    \Gamma(\mathbf{z})-\Gamma(\mathbf{z}+\boldsymbol{\vartheta})=\left(\dfrac{\partial\Gamma}{\partial z_0}(\mathbf{z}+\boldsymbol{\vartheta}), \dfrac{\partial\Gamma}{\partial z_1}(\mathbf{z}+\boldsymbol{\vartheta})\right)\check{\boldsymbol{\vartheta}}^T+\mathbf{E}_2(\mathbf{z},\boldsymbol{\vartheta}),
\end{equation*}
where $\mathbf{E}_2(\mathbf{z},\boldsymbol{\vartheta})$ is defined by
\begin{equation*}
    \mathbf{E}_2(\mathbf{z},\boldsymbol{\vartheta}):=\left[ \sum\limits_{\|\mathbf{k}\|_1\ge2}\dfrac{1}{\mathbf{k}!}\frac{\partial^{\mathbf{k}}\gamma_j}{\partial \mathbf{z}^\mathbf{k}}(\mathbf{z}+\boldsymbol{\vartheta})\check{\boldsymbol{\vartheta}}^{\mathbf{k}}:j\in\mathbb{Z}_3\right]^T.
\end{equation*}
Then, through the property of the cross product, we obtain that for all $\mathbf{z}\in \mathbb{W}(b^*)^2$ and $\boldsymbol{\vartheta}\in \tilde{I}_{\beta}^2$,
\begin{equation}\label{Gamma_taylor}
			\left|(\Gamma(\mathbf{z})-\Gamma(\mathbf{z}+\boldsymbol{\vartheta}))\cdot \left(\dfrac{\partial\Gamma}{\partial z_0}\times\dfrac{\partial\Gamma}{\partial z_1}\right)(\mathbf{z}+\boldsymbol{\vartheta})\right|
		=\left|\mathbf{E}_2(\mathbf{z},\boldsymbol{\vartheta})
		\cdot \left(\dfrac{\partial\Gamma}{\partial z_0}\times\dfrac{\partial\Gamma}{\partial z_1}\right)(\mathbf{z}+\boldsymbol{\vartheta})\right|.
\end{equation}
We next establish the bounds of the terms in the right hand side of \eqref{Gamma_taylor}.

Since $b^*< R_1:= \frac{R_0}{2}$, Cauchy's integral formula ensures that for all $ \mathbf{k}\in\mathbb{Z}_+^2 $, $j\in \mathbb{Z}_3$, $\mathbf{z}\in \mathbb{W}(b^*)^2$ and $\boldsymbol{\vartheta}\in \tilde{I}_{\beta}^2$,
\begin{equation}\label{Cauchy_integral_formula}
	\left|\frac{\partial^{\mathbf{k}}\gamma_j}{\partial \mathbf{z}^\mathbf{k}}(\mathbf{z}+\boldsymbol{\vartheta})\right| = \dfrac{\mathbf{k}!}{(2\pi)^2}\left|\int_{\tau^2_{R_1}(\mathbf{z}+\boldsymbol{\vartheta})}\dfrac{\gamma_j(\mathbf{w})}{(\mathbf{w}-\mathbf{z}-\boldsymbol{\vartheta})^{\mathbf{k}+1}}d\mathbf{w}\right|\leq \dfrac{\mathbf{k}!}{R_1^{\|\mathbf{k}\|_1}}M_{\gamma},
\end{equation}
where $\boldsymbol{\tau}_{R_1}(\mathbf{z}+\boldsymbol{\vartheta}):=\{ \mathbf{w}:=[w_0,w_1]\in\mathbb{C}^2: |w_{\iota}-z_{\iota}-\vartheta_\iota|=R_1 {~\rm for~} \iota\in\mathbb{Z}_2 \}$ and $ M_{\gamma}:=\max\{ | \gamma_\iota(\mathbf{z}) |: \iota\in\mathbb{Z}_3 {~\rm and~} \mathbf{z}\in \boldsymbol{\tau}_{R_1}(\mathbf{z}+\boldsymbol{\vartheta}) \} $.
It is easy to check that for all $\boldsymbol{\vartheta}\in \tilde{I}_\beta^2$ and $ \mathbf{k}\in\mathbb{Z}_+^2 $ with $ \|\mathbf{k}\|_1\ge2 $,
\begin{equation}\label{vector_k_bound}
|\check{\boldsymbol{\vartheta}}^{\mathbf{k}}|\leq \|\check{\boldsymbol{\vartheta}}\|^2_2(\beta R_1)^{\|\mathbf{k}\|_1-2}.
\end{equation}
By combining \eqref{Cauchy_integral_formula} and \eqref{vector_k_bound}, and after some computation, we derive that
\begin{align*}
\sum_{\|\mathbf{k}\|_1\ge 2}\left|\dfrac{1}{\mathbf{k}!}\frac{\partial^{\mathbf{k}}\gamma_j}{\partial \mathbf{z}^\mathbf{k}}(\mathbf{z}+\boldsymbol{\vartheta})\check{\boldsymbol{\vartheta}}^{\mathbf{k}}\right|
\le \dfrac{(3-2\beta)M_{\gamma}}{(1-\beta)^2R_1^2}\|\check{\boldsymbol{\vartheta}}\|^2_2.
\end{align*}
Thus, following the definition of $\mathbf{E}_2(\mathbf{z},\boldsymbol{\vartheta})$, we have that for all $\mathbf{z}\in \mathbb{W}(b^*)^2$ and $\boldsymbol{\vartheta}\in \tilde{I}_\beta^2$,
$$
\left\|\mathbf{E}_2(\mathbf{z},\boldsymbol{\vartheta})\right\|_2\leq  \dfrac{\sqrt{3}(3-2\beta)M_{\gamma}}{(1-\beta)^2R_1^2}\|\check{\boldsymbol{\vartheta}}\|_2^2.
$$
Meanwhile, the continuity of $\Gamma$ ensures that there exists a positive constant $c_1$ such that for all $\mathbf{z}\in \mathbb{W}(b^*)^2$ and $\boldsymbol{\vartheta}\in \tilde{I}_{\beta}^2$,
$$
\left\|\left(\dfrac{\partial\Gamma}{\partial z_0}\times\dfrac{\partial\Gamma}{\partial z_1}\right)(\mathbf{z}+\boldsymbol{\vartheta})\right\|_2\leq c_1.
$$
Consequently, noting that $\|\check{\boldsymbol{\vartheta}}\|_2=\zeta(\boldsymbol{\vartheta})$, from \eqref{Gamma_taylor}, we deduce the existence of a constant $c_2$ such that \eqref{Gamma_ineq} holds for all $\mathbf{z}\in \mathbb{W}(b^*)^2$ and $\boldsymbol{\vartheta}\in \tilde{I}_\beta^2$. Hence, we obtain our desired result \eqref{Gamma_ineq}.
\end{proof}

With the help of Lemmas \ref{tilde_d_lwrbnd_Su} and \ref{lemma_upper_numer_G}, we demonstrate that under assumptions \textbf{(A1)} and \textbf{(A2)}, the function $G$ meets the conditions specified in Theorem \ref{four_decay_general}, as outlined in the following proposition.	
	
\begin{proposition}\label{analy_contn_radius}
If assumptions  \textup{\textbf{(A1)}} and \textup{\textbf{(A2)}} hold, then the periodic extension of $G$ with respect to $\boldsymbol{\theta}$ satisfies the conditions in Theorem \ref{four_decay_general}, where $r$ in the statement of Theorem \ref{four_decay_general} equals to $b^*$.
\end{proposition}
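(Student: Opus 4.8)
The plan is to verify that $\Phi = G$ --- with $G$ extended $2\pi$-biperiodically in $\boldsymbol{\theta}$ to $\mathbb{R}^2\otimes I_{2\pi}^2$ --- satisfies the three requirements of Theorem~\ref{four_decay_general} with the radius $r$ equal to $b^*$: (i) $G$ is real-valued and, for each $\boldsymbol{\vartheta}$ with $\zeta(\boldsymbol{\vartheta})\neq 0$, $2\pi$-biperiodic in $\boldsymbol{\theta}$; (ii) $G$ admits a uniformly analytic extension to $\mathbb{W}(b^*)^2$ with respect to $\boldsymbol{\theta}$, so that the hypotheses of Lemma~\ref{bivarite_four_coe} hold; and (iii) the bound \eqref{eq_four_decay_general0} holds with some $M>0$ independent of $\delta\in(0,b^*)$. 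Item (i) is immediate from \eqref{eq_Gd}, since the reality and the $2\pi$-periodicity in the real argument of the $\gamma_j$ and their first derivatives transfer directly to $G$.

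For item (ii), fix $\boldsymbol{\vartheta}\in I_{2\pi}^2$ with $\zeta(\boldsymbol{\vartheta})\neq 0$. The inequality \eqref{eq_b} gives $b^*<R_0$, so by assumption \textbf{(A2)} the maps $\mathbf{z}\mapsto\Gamma(\mathbf{z})$, $\mathbf{z}\mapsto\Gamma(\mathbf{z}+\boldsymbol{\vartheta})$ and $\mathbf{z}\mapsto\left(\frac{\partial\Gamma}{\partial z_0}\times\frac{\partial\Gamma}{\partial z_1}\right)(\mathbf{z}+\boldsymbol{\vartheta})$ are holomorphic on $\mathbb{W}(b^*)^2$ (adding $\boldsymbol{\vartheta}$ keeps the argument in the horizontal strip $\mathbb{W}(R_0)^2$ by periodicity), hence the numerator of \eqref{eq_Gd} is holomorphic there; the same holds for $d(\cdot,\boldsymbol{\vartheta})$. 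Lemma~\ref{tilde_d_lwrbnd_Su} shows $|d(\mathbf{z},\boldsymbol{\vartheta})|\geq C_1(\zeta(\boldsymbol{\vartheta}))^2>0$ on $\mathbb{W}(b^*)^2$, so $d(\cdot,\boldsymbol{\vartheta})$ is holomorphic and non-vanishing on the simply connected domain $\mathbb{W}(b^*)^2$; I would use this to pick the single-valued holomorphic branch of $d^{3/2}(\cdot,\boldsymbol{\vartheta})$ that coincides with the nonnegative real power on $\mathbb{R}^2$ (where $d\geq 0$). The resulting quotient is holomorphic in $\mathbf{z}$ on $\mathbb{W}(b^*)^2$ and restricts to $G(\cdot,\boldsymbol{\vartheta})$ on $\mathbb{R}^2$; collecting these extensions over all admissible $\boldsymbol{\vartheta}$ produces the uniformly analytic extension of $G$ to $\mathbb{W}(b^*)^2$ with respect to $\boldsymbol{\theta}$.

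For item (iii), fix $0<\delta<b^*$ and any $\mathbf{z}=(z_0,z_1)$ with $|\mathrm{Im}(z_0)|=|\mathrm{Im}(z_1)|=\delta$, so $\mathbf{z}\in\mathbb{W}(b^*)^2$; Lemma~\ref{lemma_upper_numer_G} bounds the numerator of \eqref{eq_Gd} by $c(\zeta(\boldsymbol{\vartheta}))^2$ and \eqref{lemma_7_tilde_d_cplx_lwbnd_Su} bounds its denominator below by $C_1^{3/2}(\zeta(\boldsymbol{\vartheta}))^3$, so $|G(\mathbf{z},\boldsymbol{\vartheta})|\leq\frac{c}{2\pi C_1^{3/2}}\,\zeta(\boldsymbol{\vartheta})^{-1}$ uniformly in $\delta$ and in such $\mathbf{z}$. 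Hence $S_{\delta,G}(\boldsymbol{\vartheta})\leq\frac{c}{C_1^{3/2}}\,\zeta(\boldsymbol{\vartheta})^{-1}$, and I would take $M:=\frac{c}{C_1^{3/2}}\int_{I_{2\pi}^2}\zeta(\boldsymbol{\vartheta})^{-1}\,d\boldsymbol{\vartheta}$, verifying that this integral is finite because $\zeta(\boldsymbol{\vartheta})$ is the Euclidean distance from $\boldsymbol{\vartheta}$ to the nearest point of $2\pi\mathbb{Z}^2$, so $\zeta^{-1}$ is singular only at the four corners of $I_{2\pi}^2$, where in local polar coordinates $(\rho,\phi)$ the singular factor $\rho^{-1}$ is integrable against the area element $\rho\,d\rho\,d\phi$. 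This delivers \eqref{eq_four_decay_general0}. The only steps needing real care are the construction of the single-valued branch of $d^{3/2}$ on $\mathbb{W}(b^*)^2$ --- where the non-vanishing estimate of Lemma~\ref{tilde_d_lwrbnd_Su} is essential --- and the local integrability of $\zeta^{-1}$ near the corners of $I_{2\pi}^2$; once these are settled the proposition is a direct match against the definitions of $S_{\delta,G}$ and of uniformly analytic extension.
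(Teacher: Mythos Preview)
Your proposal is correct and follows essentially the same route as the paper: use assumption \textbf{(A2)} together with Lemma~\ref{tilde_d_lwrbnd_Su} to secure the uniformly analytic extension to $\mathbb{W}(b^*)^2$, then combine Lemmas~\ref{tilde_d_lwrbnd_Su} and~\ref{lemma_upper_numer_G} in \eqref{eq_Gd} to obtain $|G(\mathbf{z},\boldsymbol{\vartheta})|\leq \mathrm{const}\cdot\zeta(\boldsymbol{\vartheta})^{-1}$ and conclude by the integrability of $\zeta^{-1}$ over $I_{2\pi}^2$. Your treatment is in fact slightly more explicit than the paper's in two places the paper leaves implicit --- the selection of a single-valued holomorphic branch of $d^{3/2}$ on the simply connected strip, and the polar-coordinate verification that $\zeta^{-1}$ is locally integrable near the corners --- but the structure and the key ingredients are identical.
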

\begin{proof}
Assumption \textup{\textbf{(A2)}} ensures that for each $\boldsymbol{\vartheta}\in I_{2\pi}^2$, the periodic extensions of both functions $(\Gamma(\cdot)-\Gamma(\cdot+\boldsymbol{\vartheta}))\cdot \left(\frac{\partial\Gamma}{\partial z_0}\times\frac{\partial\Gamma}{\partial z_1}\right)(\cdot+\boldsymbol{\vartheta})$ and $d(\cdot,\boldsymbol{\vartheta})$ can be analytically extended to $\mathbb{W}(R_0)^2$. Meanwhile, Lemma \ref{tilde_d_lwrbnd_Su} shows that for all $\boldsymbol{\vartheta}\in I_{2\pi}^2$ with $\zeta(\boldsymbol{\vartheta})\neq 0$, the function $d(\cdot,\boldsymbol{\vartheta})$ has no zeros in  $ \mathbb{W}(b^*)^2$. Hence, it can be easily seen from \eqref{eq_Gd} that the periodic extension of $G$ with respect to $\boldsymbol{\theta}$ can be uniformly analytic extended to  $\mathbb{W}(b^*)^2$ with respect to $\boldsymbol{\theta}$. This means the periodic extension of $G$ with respect to $\boldsymbol{\theta}$ satisfies the conditions of Lemma \ref{bivarite_four_coe}.

On the other hand, integrating \eqref{eq_Gd} with the Lemmas \ref{tilde_d_lwrbnd_Su} and \ref{lemma_upper_numer_G} shows that there exists $c$ such that for all $\mathbf{z}\in \mathbb{W}(b^*)^2$ and $\boldsymbol{\vartheta}\in I^2_{2\pi}$, $|G(\mathbf{z},\boldsymbol{\vartheta})|\leq \frac{c}{{C_1^{3/2}}}(\zeta(\boldsymbol{\vartheta}))^{-1}$. It can be easily seen that the function $(\zeta(\boldsymbol{\vartheta}))^{-1}$, $\boldsymbol{\vartheta}\in I^2_{2\pi}$, is Lebesgue integrable on $I_{2\pi}^2$. Therefore,
from the definition of $S_{\delta,G}(\boldsymbol{\vartheta})$, we know that there is $M>0$ such that
for all $0<\delta<b^*$,
$$
\int_{I_{2\pi}^2} S_{\delta,G}(\boldsymbol{\vartheta})d\boldsymbol{\vartheta}\leq M,
$$
which is the desired condition \eqref{eq_four_decay_general0} in Theorem \ref{four_decay_general}.
\end{proof}

Now we estimate the decay pattern of matrix entries $ K_{\mathbf{k},\mathbf{l}}$ by combining Theorem \ref{four_decay_general} and Proposition \ref{analy_contn_radius}.

\begin{corollary}\label{upperbnd_of_matrix_element}
If assumptions \textup{\textbf{(A1)}} and \textup{\textbf{(A2)}} hold, then there exists a constant $M>0$ such that for all $\mathbf{k}, \mathbf{l}\in\mathbb{Z}^2$,
\begin{equation}\label{upper_bnd_element}
	\left\vert K_{\mathbf{k},\mathbf{l}}\right\vert
	\leq \frac{M}{2\pi} e^{-b\Vert\mathbf{l}-\mathbf{k}\Vert_1}.
\end{equation}
\end{corollary}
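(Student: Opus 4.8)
The plan is to read off the corollary as an almost immediate consequence of Proposition \ref{analy_contn_radius} and Theorem \ref{four_decay_general}, once the Fourier coefficients $\hat{G}_{\mathbf{k},\mathbf{l}}$ are translated into the matrix entries $K_{\mathbf{k},\mathbf{l}}$ via the identity $\hat{G}_{\mathbf{k},\mathbf{l}} = \hat{K}_{\mathbf{k}-\mathbf{l},\mathbf{l}} = K_{\mathbf{k}-\mathbf{l},-\mathbf{l}}$ recorded at the beginning of Section \ref{sec:3} (which follows from the periodicity of $K$). All the analytic content — the lower bound on $|d(\mathbf{z},\boldsymbol{\vartheta})|$ from Lemma \ref{tilde_d_lwrbnd_Su}, the numerator bound from Lemma \ref{lemma_upper_numer_G}, and the resulting verification that $G$ satisfies the hypotheses of Theorem \ref{four_decay_general} with $r=b^*$ — has already been done, so the proof amounts to bookkeeping.

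First I would invoke Proposition \ref{analy_contn_radius}: under \textbf{(A1)} and \textbf{(A2)}, the periodic extension of $G$ with respect to $\boldsymbol{\theta}$ meets the conditions of Theorem \ref{four_decay_general} with the radius $r$ there equal to $b^*$. Applying Theorem \ref{four_decay_general} then produces a constant $M>0$ such that, for every $0<\delta<b^*$ and every $\mathbf{k},\mathbf{l}\in\mathbb{Z}^2$, $|\hat{G}_{\mathbf{k},\mathbf{l}}|\leq \frac{M}{2\pi}e^{-\delta\|\mathbf{k}\|_1}$. Next I would carry out the change of indices: writing an arbitrary pair $(\mathbf{k}',\mathbf{l}')$ in the form $\mathbf{k}'=\mathbf{k}-\mathbf{l}$, $\mathbf{l}'=-\mathbf{l}$, i.e. $\mathbf{l}=-\mathbf{l}'$ and $\mathbf{k}=\mathbf{k}'-\mathbf{l}'$, the identity $\hat{G}_{\mathbf{k},\mathbf{l}}=K_{\mathbf{k}-\mathbf{l},-\mathbf{l}}$ gives $K_{\mathbf{k}',\mathbf{l}'}=\hat{G}_{\mathbf{k}'-\mathbf{l}',-\mathbf{l}'}$, hence $|K_{\mathbf{k}',\mathbf{l}'}|\leq \frac{M}{2\pi}e^{-\delta\|\mathbf{k}'-\mathbf{l}'\|_1}$ for all $0<\delta<b^*$.

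Finally, I would observe that the choice of $b$ in \eqref{eq_b}, together with $b^*=\frac{1}{2}\bigl(b+\min\{C_0^2/(\sqrt{2}M_0M_1),R_0/2\}\bigr)$, forces $b<b^*$, so $\delta=b$ is an admissible choice above; using $\|\mathbf{k}'-\mathbf{l}'\|_1=\|\mathbf{l}'-\mathbf{k}'\|_1$ and relabeling $(\mathbf{k}',\mathbf{l}')$ as $(\mathbf{k},\mathbf{l})$ yields exactly \eqref{upper_bnd_element}. There is no real obstacle in this argument — the heavy lifting sits in Lemmas \ref{tilde_d_lwrbnd_Su}, \ref{lemma_upper_numer_G} and Proposition \ref{analy_contn_radius}; the only points that deserve a sentence of care are getting the index substitution between $\hat{G}$ and $K$ right and checking that $b$ lies strictly inside the analytic-extension radius $b^*$ so that Theorem \ref{four_decay_general} can be applied with $\delta=b$.
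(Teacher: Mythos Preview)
Your proposal is correct and mirrors the paper's own proof almost exactly: both invoke Proposition \ref{analy_contn_radius} to feed $G$ into Theorem \ref{four_decay_general} with $r=b^*$, take $\delta=b$ using $b<b^*$, and then translate $\hat{G}_{\mathbf{k},\mathbf{l}}$ into $K_{\mathbf{k},\mathbf{l}}$ via the index identity $K_{\mathbf{k},\mathbf{l}}=\hat{G}_{\mathbf{k}-\mathbf{l},-\mathbf{l}}$. The only cosmetic difference is that the paper rederives this identity in the proof by writing out $K_{\mathbf{k},\mathbf{l}}=\langle \mathcal{K}e_{\mathbf{l}},e_{\mathbf{k}}\rangle$ explicitly, whereas you cite it from the beginning of Section \ref{sec:3}.
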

\begin{proof}
As $ 0<b<b^* $, Theorem \ref{four_decay_general} and Proposition \ref{analy_contn_radius} imply that there exists a constant $M>0$ such that for all $\mathbf{k}, \mathbf{l}\in\mathbb{Z}^2$,
\begin{equation}\label{G_fourier_uppbnd}
	|\hat{G}_{\mathbf{k},\mathbf{l}}|\leq \frac{M}{2\pi} e^{-b\Vert\mathbf{k}\Vert_1}.
\end{equation}
Meanwhile, the definition of $G$ shows that for all $\mathbf{k}, \mathbf{l}\in\mathbb{Z}^2$,
\begin{eqnarray*}
K_{\mathbf{k},\mathbf{l}}=\left<\mathcal{K}e_{\mathbf{l}}, e_\mathbf{k}\right>&=&\int_{I_{2\pi}^2}\int_{I_{2\pi}^2}K(\boldsymbol{\theta},\boldsymbol{\eta})e_{-\mathbf{k}}(\boldsymbol{\theta})e_{\mathbf{l}}(\boldsymbol{\eta})d \boldsymbol{\theta}d\boldsymbol{\eta}\\
&=&\int_{I_{2\pi}^2}\int_{I_{2\pi}^2}G(\boldsymbol{\theta},\boldsymbol{\vartheta})e_{\mathbf{l}-\mathbf{k}}(\boldsymbol{\theta})e_{\mathbf{l}}(\boldsymbol{\vartheta})d \boldsymbol{\theta}d\boldsymbol{\vartheta}=\hat{G}_{\mathbf{k}-\mathbf{l},-\mathbf{l}}.
\end{eqnarray*}
Combining the above equality with the inequality \eqref{G_fourier_uppbnd} yields the desired result \eqref{upper_bnd_element}.
\end{proof}
	

\section{Stability and convergence analysis}
\label{sec:4}

In this section, we consider the stability and convergence of the proposed fast Fourier-Galerkin method.
We will demonstrate that the proposed truncation strategy does not ruin the stability, and achieves a quasi-optimal convergence order. The unique solvability of \eqref{truncated_matrix_equation} results from the stability of the fast Fourier–Galerkin method.
	
To analyze the proposed fast Fourier-Galerkin method, we convert linear system \eqref{truncated_matrix_equation} to an operator equation form.
For all $n\in\mathbb{N}$, we def\mbox{}ine the operator $\widetilde{\mathcal{K}}_n$ by
\begin{equation*}
(\widetilde{\mathcal{K}}_n\varphi)(\boldsymbol{\theta}):=\int_{I_{2\pi}^2}\widetilde{K}_n(\boldsymbol{\theta}, \boldsymbol{\phi})\varphi(\boldsymbol{\phi})
d{\boldsymbol{\phi}},\quad \varphi\in L_2(I^2_{2\pi})~\mathrm{and}~\boldsymbol{\theta}\in I^2_{2\pi},
\end{equation*}
where $\widetilde{K}_n(\boldsymbol{\theta}, \boldsymbol{\phi}):=\sum_{\mathbf{k}\in \mathbb{Z}_{-n,n}^2}\sum_{\mathbf{l}\in \mathbb{Z}_{-n,n}^2}\widetilde{K}_{\mathbf{k},\mathbf{l}}e_{\mathbf{k}}(\boldsymbol{\theta})e_{\mathbf{l}}(\boldsymbol{\phi})$, $\boldsymbol{\theta}\in I^2_{2\pi}~\mathrm{and}~ \boldsymbol{\phi}\in I^2_{2\pi}$.
Then solving the linear system \eqref{truncated_matrix_equation} is equivalent to finding $\tilde{\rho}_n\in X_n$ such that
\begin{equation}\label{trunc_operator_eq}
	\tilde{\rho}_n-\widetilde{\mathcal{K}}_n\tilde{\rho}_n=\mathcal{P}_n g.
\end{equation}
	
We first review the stability and convergence order of the Fourier-Galerkin method for solving \eqref{Galerkin_operator_eq} without truncation strategies. Since we were unable to find discussions in the literature regarding the stability and convergence of the Fourier-Galerkin method for solving \eqref{Galerkin_operator_eq}, we provide a brief discussion on these topics in the following theorem to ensure the self-containment of this paper. This discussion is based on the contents from \cite{atkinson1996numerical,chen2015multiscale,kress2013linear}. To enhance the readability of the article, we recall the following notations.
For $p\in\mathbb{N}$, we denote by $H_p(I_{2\pi}^2)$ the Sobolev space of all functions $\varphi\in L_2(I_{2\pi}^2)$ with the property $\sum_{\mathbf{k}\in\mathbb{Z}^2}(1+\|\mathbf{k}\|_2^2)^p|\hat{\varphi}_\mathbf{k}|^2<\infty$.
For $\varphi\in H_p(I_{2\pi}^2)$, the norm is given by
$
\|\varphi\|_{H_p}:=\left(\sum_{\mathbf{k}\in\mathbb{Z}^2}(1+\|\mathbf{k}\|_2^2)^p|\hat{\varphi}_\mathbf{k}|^2\right)^{1/2}.
$
It is well known that for any $\varphi\in H_p(I_{2\pi}^2)$,
\begin{equation}\label{sobolev_proj_error_upperbnd}
	\Vert\varphi- \mathcal{P}_n\varphi\Vert \leq n^{-p}\Vert\varphi\Vert_{H_p}.
\end{equation}

	
\begin{theorem}\label{Galerkin_error}
Let $p\in\mathbb{N}$. If assumptions \textup{\textbf{(A1)}} and \textup{\textbf{(A2)}} hold, then the inverse operators of $\mathcal{I}-\mathcal{K}_n$, $n\in\mathbb{N}$, exist and are uniformly bounded for sufficiently large $n$. Additionally, there exists a positive constant $c$ such that for all $g\in H_p(I_{2\pi}^2)$ and sufficiently large n,
\begin{equation}\label{sol_error_1}
	\| \rho-\rho_n\|\leq cn^{-p}\|\rho\|_{H_p},
\end{equation}
where $g$ appears on the right hand side of \eqref{Galerkin_operator_eq}, and $\rho_n$ is the solution of \eqref{Galerkin_operator_eq}.
\end{theorem}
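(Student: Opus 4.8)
The plan is to run the classical convergence theory for projection methods applied to a second-kind operator equation, in the spirit of \cite{atkinson1996numerical,chen2015multiscale,kress2013linear}. The first step is to show that $\mathcal{K}$ is compact on $L_2(I_{2\pi}^2)$. Expanding $\Gamma(\boldsymbol{\theta})-\Gamma(\boldsymbol{\eta})$ near the diagonal, the leading term $(D\Gamma)(\boldsymbol{\eta})(\boldsymbol{\theta}-\boldsymbol{\eta})$ lies in the tangent plane at $\Gamma(\boldsymbol{\eta})$ and is therefore orthogonal to $\left(\frac{\partial\Gamma}{\partial\eta_0}\times\frac{\partial\Gamma}{\partial\eta_1}\right)(\boldsymbol{\eta})$; hence the numerator in \eqref{kernel_K_specific} vanishes to second order, while \textbf{(A1)} makes the denominator comparable to $\zeta(\boldsymbol{\theta}-\boldsymbol{\eta})^{3}$. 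Consequently $|K(\boldsymbol{\theta},\boldsymbol{\eta})|$ is bounded by a constant multiple of $\zeta(\boldsymbol{\theta}-\boldsymbol{\eta})^{-1}$, i.e. $K$ is weakly singular on a two-dimensional parameter domain, and the standard theory of weakly singular integral operators gives the compactness of $\mathcal{K}$ on $L_2(I_{2\pi}^2)$.

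Next I would recall that the Fourier projections $\mathcal{P}_n$ are orthogonal, so $\|\mathcal{P}_n\|=1$, and that $\mathcal{P}_n\to\mathcal{I}$ strongly on $L_2(I_{2\pi}^2)$. Since $\mathcal{K}$ is compact, $\mathcal{P}_n$ converges to $\mathcal{I}$ uniformly on the compact set $\overline{\mathcal{K}(\{\varphi:\|\varphi\|\le1\})}$, which yields the norm convergence $\|\mathcal{K}-\mathcal{K}_n\|=\|(\mathcal{I}-\mathcal{P}_n)\mathcal{K}\|\to0$. Because $\mathcal{I}-\mathcal{K}$ is boundedly invertible by the unique solvability of \eqref{orig_bie}, the factorization
\begin{equation*}
\mathcal{I}-\mathcal{K}_n=(\mathcal{I}-\mathcal{K})\bigl(\mathcal{I}+(\mathcal{I}-\mathcal{K})^{-1}(\mathcal{K}-\mathcal{K}_n)\bigr)
\end{equation*}
together with a Neumann-series estimate shows that whenever $n$ is large enough that $\|(\mathcal{I}-\mathcal{K})^{-1}\|\,\|\mathcal{K}-\mathcal{K}_n\|\le\tfrac12$, the operator $\mathcal{I}-\mathcal{K}_n$ is invertible on $L_2(I_{2\pi}^2)$ with $\|(\mathcal{I}-\mathcal{K}_n)^{-1}\|\le2\|(\mathcal{I}-\mathcal{K})^{-1}\|$. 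This establishes existence and uniform boundedness of the inverses, hence also unique solvability of \eqref{Galerkin_operator_eq} for large $n$.

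For the error estimate, applying $\mathcal{P}_n$ to $\rho-\mathcal{K}\rho=g$ and subtracting \eqref{Galerkin_operator_eq} gives the Galerkin identity $(\mathcal{I}-\mathcal{K}_n)(\rho_n-\rho)=\mathcal{P}_n\rho-\rho$, whence
\begin{equation*}
\|\rho-\rho_n\|\le\|(\mathcal{I}-\mathcal{K}_n)^{-1}\|\,\|\rho-\mathcal{P}_n\rho\|\le2\|(\mathcal{I}-\mathcal{K})^{-1}\|\,\|\rho-\mathcal{P}_n\rho\|.
\end{equation*}
Under \textbf{(A2)} the kernel is analytic off the diagonal and weakly singular on it, so the regularity theory for \eqref{orig_bie} (a bootstrap using the smoothing property of $\mathcal{K}$ together with $\rho=g+\mathcal{K}\rho$) shows that $g\in H_p(I_{2\pi}^2)$ forces $\rho\in H_p(I_{2\pi}^2)$. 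Invoking \eqref{sobolev_proj_error_upperbnd} then gives $\|\rho-\mathcal{P}_n\rho\|\le n^{-p}\|\rho\|_{H_p}$, and \eqref{sol_error_1} follows with $c=2\|(\mathcal{I}-\mathcal{K})^{-1}\|$.

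I expect the main obstacle to be the compactness of $\mathcal{K}$ on $L_2(I_{2\pi}^2)$: in contrast with the two-dimensional BIEs of \cite{cai2018fast,jiang2014fast,2021jiangfast}, the present kernel is not square integrable, so one must exploit carefully the second-order cancellation in the numerator of \eqref{kernel_K_specific}---a manifestation of the double-layer structure---to reduce $K$ to a genuinely weakly singular kernel before the standard compactness theory applies. The analogous but slightly finer point is the smoothing/mapping property of $\mathcal{K}$ on the Sobolev scale needed to guarantee $\rho\in H_p(I_{2\pi}^2)$ and thus make the right-hand side of \eqref{sol_error_1} meaningful; everything else is the routine perturbation argument for projection methods.
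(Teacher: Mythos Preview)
Your proposal is correct and follows essentially the same route as the paper: establish the weak-singularity bound $|K(\boldsymbol{\theta},\boldsymbol{\eta})|\le c\,\zeta(\boldsymbol{\theta}-\boldsymbol{\eta})^{-1}$ to obtain compactness of $\mathcal{K}$, use strong convergence of $\mathcal{P}_n$ plus compactness to get $\|\mathcal{K}-\mathcal{K}_n\|\to 0$, deduce uniform invertibility of $\mathcal{I}-\mathcal{K}_n$, and combine the Galerkin identity with \eqref{sobolev_proj_error_upperbnd} and the regularity $g\in H_p\Rightarrow\rho\in H_p$. The only cosmetic difference is that the paper obtains the $\zeta^{-1}$ bound by specializing its complex-domain Lemmas~\ref{tilde_d_lwrbnd_Su} and~\ref{lemma_upper_numer_G} to real arguments and cites Theorem~3.1.1 of \cite{atkinson1996numerical} for the perturbation step, whereas you argue the second-order cancellation directly via a real Taylor expansion and write out the Neumann-series estimate explicitly.
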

\begin{proof}
Combining the equality \eqref{eq_Gd} with the Lemmas \ref{tilde_d_lwrbnd_Su} and \ref{lemma_upper_numer_G} shows that there is $c_0>0$ such that for all $\mathbf{z}\in \mathbb{W}(b)^2$ and $\boldsymbol{\vartheta}\in I^2_{2\pi}$,
$|G(\mathbf{z},\boldsymbol{\vartheta})|\leq \frac{c_0}{{C_1^{3/2}}}(\zeta(\boldsymbol{\vartheta}))^{-1}$. Then, from the definition of $G$,  it follows that there exists a positive constant $c_1$ such that for all $\boldsymbol{\theta}, \boldsymbol{\phi}\in I^2_{2\pi}$,
\begin{equation*}\label{kernel_upperbnd}
|K(\boldsymbol{\theta}, \boldsymbol{\phi})|\leq c_1(\zeta(\boldsymbol{\theta}-\boldsymbol{\phi}))^{-1}.
\end{equation*}
This confirms that the operator $\mathcal{K}$ is compact on $L_2(I_{2\pi}^2)$ (see Theorem 2.1.7 in \cite{chen2015multiscale}). Furthermore, the projection operators $\mathcal{P}_n$ converge pointwise to $\mathcal{I}$ on $L_2(I_{2\pi}^2)$, i.e. $\mathcal{P}_n\varphi\rightarrow{\varphi}$ as $n\rightarrow{\infty}$ for every $\varphi\in L_2(I_{2\pi}^2)$. This, along with the compactness of $\mathcal{K}$, implies that $\|\mathcal{K}-\mathcal{K}_n\|\rightarrow{0}$ as $n\rightarrow{\infty}$ (see Lemma 3.1.2 in \cite{atkinson1996numerical}).

Meanwhile, the compactness of $\mathcal{K}$ ensures that the operator $\mathcal{I}-\mathcal{K}$ is injective on $L^{2}(I_{2\pi}^2)$, as proven in Theorem 6.23 in \cite{kress2013linear}.
Consequently, Theorem 3.1.1 in \cite{atkinson1996numerical} guarantees that there is $n^*>0$ such that for all $n\in\mathbb{N}$ with $n>n^*$, the operator $(\mathcal{I}-\mathcal{K}_n)^{-1}$ exists as a bounded operator on $L_2(I_{2\pi}^2)$, and it holds that $$\sup_{n\geq n^*}\|(\mathcal{I}-\mathcal{K}_n)^{-1}\| < \infty.$$ Furthermore, for all $n\geq n^*$, it holds that $$\|\rho-\rho_n\|\leq \| (\mathcal{I}-\mathcal{K}_n)^{-1}\| \| \rho-\mathcal{P}_n\rho\|.$$
Additionally, since $g$ is in $H_p(I_{2\pi}^2)$, $\rho$ is also in $H_p(I_{2\pi}^2)$ as detailed in section 9.1.4 in \cite{atkinson1996numerical}. Therefore, the boundedness of $\sup_{n\geq n^*}\|(\mathcal{I}-\mathcal{K}_n)^{-1}\|$ combined with the inequality  \eqref{sobolev_proj_error_upperbnd} leads to the result \eqref{sol_error_1}.
\end{proof}

Next, we establish the stability and convergence order of the proposed fast Fourier-Galerkin method by estimating the difference between the matrices $\mathbf{K}_N$ and $\widetilde{\mathbf{K}}_N$.
As preparation, we introduce the following notations. For sets $A$ and $B$, $A\setminus B$ represents the set $ \{ x:x\in A \mathrm{~ and ~} x\notin B \}$.
For any $q>0$, $n\in \mathbb{N}$ and $[k_0,l_0]\in \mathbb{Z}_{-n,n}^2$, we define $n_0:=\lfloor q\ln N \rfloor$, where $\lfloor a\rfloor$ denotes the greatest integer less than or equal to $a\in\mathbb{R}$.
Recall that $N=(2n+1)^2$.
For $q>0$, $n\in\mathbb{N}$ with $n> q\ln N$, and $\mathbf{l}:=[l_0,l_1]\in\mathbb{Z}_{-n,n}^2$, we define
\begin{equation*}
\mathbb{I}_{\mathbf{l},N}(q):=\{\mathbf{k}\in\mathbb{Z}_{-n,n}^2:|k_0-l_0|>q\ln N\},
\end{equation*}
and
\begin{equation*}
\mathbb{J}_{\mathbf{l},N}(q):=\{\mathbf{k}\in\mathbb{Z}_{-n,n}^2:|k_0-l_0|\leq q\ln N~\mathrm{and}~|k_1-l_1|>q\ln N-|k_0-l_0|\}.
\end{equation*}
The condition $n> q\ln N$ ensures $\mathbb{I}_{\mathbf{l},N}(q)$ and $\mathbb{J}_{\mathbf{l},N}(q)$ are not empty.	
To simplify, we denote $\mathbb{I}_{\mathbf{l},N}(q)$ and $\mathbb{J}_{\mathbf{l},N}(q)$ as $\mathbb{I}_{\mathbf{l},N}$ and $\mathbb{J}_{\mathbf{l},N}$, respectively.
Define $\mathbf{E}_N:=\mathbf{K}_N-\widetilde{\mathbf{K}}_N$.
For any matrix $\mathbf{B}:=[b_{kl}]\in\mathbb{C}^{m\times m}$ with $m\in\mathbb{N}$, let	$\|\mathbf{B}\|_1:=\max_{l\in\mathbb{Z}_m}\left\{\sum_{k\in\mathbb{Z}_m}|b_{kl}|\right\}$, and $\|\mathbf{B}\|_\infty:=\max_{k\in\mathbb{Z}_m}\left\{\sum_{l\in\mathbb{Z}_m}|b_{kl}|\right\}$. From Corollary \ref{upperbnd_of_matrix_element}, we can derive the upper bounds for $\|\mathbf{E}_N\|_1$, $\|\mathbf{E}_N\|_\infty$, and $\|\mathbf{E}_N\|_2$, which are detailed in the following lemma.

\begin{lemma}\label{all_norm0}
Let $q>0$. If assumptions  \textup{\textbf{(A1)}} and \textup{\textbf{(A2)}} hold, then there exists positive constant $c$ such that for all $n\in\mathbb{N}$ with $n> q\ln N$, there hold $\|\mathbf{E}_N\|_1\leq c  N^{-qb}\ln N$, $\|\mathbf{E}_N\|_\infty\leq cN^{-qb}\ln N$ and $\|\mathbf{E}_N\|_2\leq c  N^{-qb}\ln N$.
\end{lemma}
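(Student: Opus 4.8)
The plan is to estimate $\|\mathbf{E}_N\|_\infty$ directly from Corollary~\ref{upperbnd_of_matrix_element}, then obtain $\|\mathbf{E}_N\|_1$ by the symmetry of the truncation pattern, and finally interpolate via $\|\mathbf{E}_N\|_2\leq\sqrt{\|\mathbf{E}_N\|_1\|\mathbf{E}_N\|_\infty}$ (the standard bound coming from $\|\mathbf B\|_2^2\leq\|\mathbf B\|_1\|\mathbf B\|_\infty$). So the heart of the matter is a single one-sided estimate. Fix $\mathbf{k}=[k_0,k_1]\in\mathbb{Z}_{-n,n}^2$. The nonzero entries of the $\mathbf{k}$-th row of $\mathbf{E}_N$ are exactly those $K_{\mathbf{k},\mathbf{l}}$ with $\|\mathbf{k}-\mathbf{l}\|_1>q\ln N$ and $\mathbf{l}\in\mathbb{Z}_{-n,n}^2$; so by Corollary~\ref{upperbnd_of_matrix_element},
\begin{equation*}
\sum_{\mathbf{l}\in\mathbb{Z}_{-n,n}^2}|E_{\mathbf{k},\mathbf{l}}|\leq\frac{M}{2\pi}\sum_{\substack{\mathbf{m}\in\mathbb{Z}^2\\ \|\mathbf{m}\|_1>q\ln N}}e^{-b\|\mathbf{m}\|_1},
\end{equation*}
after substituting $\mathbf{m}=\mathbf{l}-\mathbf{k}$ and enlarging the index set to all of $\mathbb{Z}^2$.

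Now I would evaluate the tail sum by grouping according to the value $s=\|\mathbf{m}\|_1$. The number of $\mathbf{m}\in\mathbb{Z}^2$ with $\|\mathbf{m}\|_1=s$ is $4s$ for $s\geq 1$, so the tail is bounded by $\sum_{s>q\ln N}4s\,e^{-bs}$. This is a geometric-type series with a linear polynomial factor, so it is dominated by a constant multiple of $(q\ln N)\,e^{-bq\ln N}=cq\,\ln N\cdot N^{-qb}$ — more precisely one writes $\sum_{s> T}4s e^{-bs}\leq 4 e^{-bT}\sum_{j\geq 1}(T+j)e^{-bj}\leq c(1+T)e^{-bT}$ with $c$ depending only on $b$, and then $T=\lfloor q\ln N\rfloor$ gives the $N^{-qb}\ln N$ behavior (absorbing the $+1$ and the floor into the constant, valid for $N$ large, i.e. $n> q\ln N$ as assumed). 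This yields $\|\mathbf{E}_N\|_\infty\leq cN^{-qb}\ln N$ with $c$ depending only on $q$ and $b$ (hence, since $b$ is fixed by the geometry, only on $q$). The estimate for $\|\mathbf{E}_N\|_1$ is identical because the truncation set $\mathbb{L}_N(q)$ and the bound in Corollary~\ref{upperbnd_of_matrix_element} are both symmetric in $\mathbf{k}\leftrightarrow\mathbf{l}$: the $\mathbf{l}$-th column sum $\sum_{\mathbf{k}}|E_{\mathbf{k},\mathbf{l}}|$ is controlled by the same tail sum $\sum_{\|\mathbf{m}\|_1>q\ln N}e^{-b\|\mathbf{m}\|_1}$.

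Finally, combining the two one-sided bounds with the interpolation inequality $\|\mathbf{E}_N\|_2\leq\bigl(\|\mathbf{E}_N\|_1\|\mathbf{E}_N\|_\infty\bigr)^{1/2}$ gives $\|\mathbf{E}_N\|_2\leq cN^{-qb}\ln N$ with the same type of constant, completing all three claims. I do not anticipate a genuine obstacle here; the only point requiring a little care is the bookkeeping in the tail-sum estimate — making sure the polynomial factor $s$ really only contributes the $\ln N$ (not a worse power) and that the floor in $n_0=\lfloor q\ln N\rfloor$ together with the condition $n>q\ln N$ is enough to keep the index manipulations and the absorption of lower-order terms into $c$ legitimate uniformly in $n$. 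Everything else is a direct application of Corollary~\ref{upperbnd_of_matrix_element} and elementary matrix-norm inequalities; the sets $\mathbb{I}_{\mathbf{l},N}$ and $\mathbb{J}_{\mathbf{l},N}$ introduced just before the lemma suggest the authors may instead split the tail region into the "$|k_0-l_0|$ large" part and the "$|k_0-l_0|$ moderate but $|k_1-l_1|$ large" part and sum a one-dimensional geometric series over each, which is an equivalent route to the same $N^{-qb}\ln N$ bound.
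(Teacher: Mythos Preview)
Your proposal is correct and follows essentially the same strategy as the paper: bound the entries via Corollary~\ref{upperbnd_of_matrix_element}, estimate the resulting exponential tail sum, use the $\mathbf{k}\leftrightarrow\mathbf{l}$ symmetry of both the bound and the truncation set to pass between $\|\cdot\|_1$ and $\|\cdot\|_\infty$, and then interpolate via $\|\mathbf{E}_N\|_2^2\leq\|\mathbf{E}_N\|_1\|\mathbf{E}_N\|_\infty$. The only cosmetic difference is in the tail bookkeeping: you group by level sets $\|\mathbf{m}\|_1=s$ to get $\sum_{s>q\ln N}4s\,e^{-bs}$, whereas the paper carries out precisely the $\mathbb{I}_{\mathbf{l},N}/\mathbb{J}_{\mathbf{l},N}$ coordinate split you anticipated, summing one-dimensional geometric series over each piece; both routes deliver the same $(\ln N)\,N^{-qb}$ estimate.
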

\begin{proof}
We begin by considering the upper bound of $\|\mathbf{E}_N\|_1$.
The definitions of $\mathbb{I}_{\mathbf{l},N}$ and $\mathbb{J}_{\mathbf{l},N}$ show that for all $n\in \mathbb{N}$,
$$\{[\mathbf{k}, \mathbf{l}]: \mathbf{l}\in \mathbb{Z}_{-n,n}^2, \mathbf{k}\in \mathbb{I}_{\mathbf{l},N}\cup \mathbb{J}_{\mathbf{l},N}\}=\mathbb{Z}_{-n,n}^4\setminus\mathbb{L}_N(q).$$ Thus, it follows from
the definitions of $\|\cdot\|_1$ and $\widetilde{\mathbf{K}}_N$ that
\begin{equation}\label{eq_IIJJBref}
	\left\|\mathbf{E}_N\right\|_1	=\max_{\mathbf{l}\in\mathbb{Z}_{-n,n}^2}\left\{ \sum_{\mathbf{k}\in\mathbb{Z}_{-n,n}^2}|{K}_{\mathbf{k},\mathbf{l}}-\widetilde{K}_{\mathbf{k},\mathbf{l}}|\right\}
	\leq \max_{\mathbf{l}\in\mathbb{Z}_{-n,n}^2}\left\{\sum_{\mathbf{k}\in \mathbb{I}_{\mathbf{l},N}}|K_{\mathbf{k},\mathbf{l}}|+\sum_{\mathbf{k}\in \mathbb{J}_{\mathbf{l},N}}|K_{\mathbf{k},\mathbf{l}}|\right\}.
\end{equation}
Corollary \ref{upperbnd_of_matrix_element} provides that there exists a constant $M>0$ such that for $\mathbf{k}, \mathbf{l}\in\mathbb{Z}^2$,
\begin{equation}\label{upperbnd_entriesBref}
	\left|K_{\mathbf{k},\mathbf{l}}\right|\leq \frac{M}{2\pi} e^{-b\|\mathbf{l}-\mathbf{k}\|_1}.
\end{equation}
Substituting \eqref{upperbnd_entriesBref} into \eqref{eq_IIJJBref} leads to
\begin{equation}\label{eq_IIJJ2Bref}
	\left\|\mathbf{E}_N\right\|_1	
	\leq \frac{M}{2\pi} \max_{\mathbf{l}\in\mathbb{Z}_{-n,n}^2}\left\{\sum_{\mathbf{k}\in \mathbb{I}_{\mathbf{l},N}}e^{-b\|\mathbf{l}-\mathbf{k}\|_1}+\sum_{\mathbf{k}\in \mathbb{J}_{\mathbf{l},N}}e^{-b\|\mathbf{l}-\mathbf{k}\|_1}\right\}.
\end{equation}
Therefore, to estimate $\left\|\mathbf{E}_N\right\|_1$,  it is crucial to establish the upper bounds for the sums on the right-hand side of \eqref{eq_IIJJ2Bref}.

The definitions of $\mathbb{I}_{\mathbf{l},N}$ and $n_0$ show that for all $n\in \mathbb{N}$ with $n> q\ln N$, $\mathbf{l}:=[l_0, l_1] \in\mathbb{Z}^2_{-n,n}$, and $\mathbf{k}:=[k_0,k_1]\in \mathbb{I}_{\mathbf{l},N}$, there are $|l_0-k_0|>q\ln N\geq n_0$ and $k_1\in\mathbb{Z}_{-n,n}$. Therefor, for all $n\in \mathbb{N}$ with $n> q\ln N$ and $\mathbf{l} \in\mathbb{Z}^2_{-n,n}$,
\begin{eqnarray}\label{lemma10_1_termBref}
\nonumber	\sum_{ \mathbf{k} \in \mathbb{I}_{\mathbf{l},N}}e^{-b\|\mathbf{l}-\mathbf{k}\|_1}&\leq & \left(\sum_{|l_0-k_0|\geq n_0+1}e^{-b|l_0-k_0|}\right)\left(\sum_{k_1\in \mathbb{Z}}e^{-b|l_1-k_1|}\right)\\
&\leq& \frac{4 e^{-b(n_0+1)}}{(1-e^{-b})^2}.
\end{eqnarray}	
Similarly, from the definitions of $n_0$, $n_1$ and $\mathbb{J}_{\mathbf{l},N}$,  show that for all $n\in \mathbb{N}$ with $n> q\ln N$,  $\mathbf{l} \in\mathbb{Z}^2_{-n,n}$, and $\mathbf{k}\in \mathbb{I}_{\mathbf{l},N}$, there are $|l_0-k_0|\leq q\ln N\leq n_0$ and $|l_1-k_1|\geq n_0-|l_0-k_0|+1$. Hence, for all $n\in \mathbb{N}$ with $n> q\ln N$ and $\mathbf{l} \in\mathbb{Z}^2_{-n,n}$,
\begin{eqnarray}\label{lemma10_1_termBref2}
\nonumber	\sum_{ \mathbf{k} \in \mathbb{J}_{\mathbf{l},N}}e^{-b\|\mathbf{l}-\mathbf{k}\|_1}&\leq& \sum_{|l_0-k_0|\leq n_0}e^{-b|l_0-k_0|}\left(\sum_{|l_1-k_1|\geq n_0-|l_0-k_0|+1}e^{-b|l_1-k_1|}\right)\\
&\leq& 2\sum_{|l_0-k_0|\leq n_0}\frac{e^{-b(n_0+1)}}{1-e^{-b}}.
\end{eqnarray}	
Notably, the cardinality of set $\{k_0\in\mathbb{Z}:|l_0-k_0|\leq n_0\}$ equals $ 2n_0+1 $.
Thus, it follows from \eqref{lemma10_1_termBref2} that for all $n\in \mathbb{N}$ with  $n> q\ln N$, and $\mathbf{l}\in\mathbb{Z}_{-n,n}^2 $,
\begin{equation}\label{app_ineq1Bref}
	\sum_{ \mathbf{k} \in \mathbb{J}_{\mathbf{l},N}}e^{-b\|\mathbf{l}-\mathbf{k}\|_1}\leq \frac{2(2n_0+1 )}{(1-e^{-b})}e^{-b(n_0+1)}.
\end{equation}
Note that $n_0\leq q\ln N$. Substituting \eqref{lemma10_1_termBref} and \eqref{app_ineq1Bref} into \eqref{eq_IIJJ2Bref} yields that there is a positive constant $c_2$ such that for all $n\in\mathbb{N}$ satisfying $n> q\ln N$, $\|\mathbf{E}_N\|_1\leq c_2 N^{-qb}\ln N$.

The definitions of $\|\cdot\|_\infty$ and $\widetilde{K}_N$ ensure that
\begin{equation*}
	\left\Vert\mathbf{E}_N\right\Vert_\infty
	=\max_{\mathbf{k}\in\mathbb{Z}_{-n,n}^2}\left\{ \sum_{\mathbf{l}\in\mathbb{Z}_{-n,n}^2}|{K}_{\mathbf{k},\mathbf{l}}-\widetilde{K}_{\mathbf{k},\mathbf{l}}|\right\}
	\leq\max_{\mathbf{k}\in\mathbb{Z}_{-n,n}^2}\left\{
	\sum_{\mathbf{l}\in\mathbb{I}_{\mathbf{k},N}}|K_{\mathbf{k},\mathbf{l}}|+
	\sum_{\mathbf{l}\in\mathbb{J}_{\mathbf{k},N}}|K_{\mathbf{k},\mathbf{l}}|
	\right\}.
\end{equation*}
Then, applying  the inequality \eqref{upperbnd_entriesBref} and following a similar process as for establishing the upper bound of $\|\mathbf{E}_N\|_1$, we derive that there exists a positive constant $c_3$ such that for all $n\in\mathbb{N}$ with $n> q\ln N$, $\|\mathbf{E}_N\|_\infty\leq c_3 N^{-qb}\ln N$.
Considering $\|\mathbf{E}_N\|_2^2 \leq \|\mathbf{E}_N\|_1 \|\mathbf{E}_N\|\infty$, it follows that $n\in\mathbb{N}$ with $n> q\ln N$, $\|\mathbf{E}_N\|_2 \leq \sqrt{c_2c_3} N^{-qb}\ln N$. By setting $c:=\max\{c_2, c_3\}$, we achieve the desired result.
\end{proof}

With the help of Lemma \ref{all_norm0}, we have the following auxiliary lemma.

\begin{lemma}\label{truncated_operator_norm_error}
Let $q>0$. If assumptions \textup{\textbf{(A1)}} and \textup{\textbf{(A2)}} hold, then there exists a positive constant $c$ such that for all $\varphi\in L_2(I_{2\pi}^2)$ and $n\in\mathbb{N}$ with $n> q\ln N$,
$$
	\left\Vert(\mathcal{K}_n-\widetilde{\mathcal{K}}_n)\mathcal{P}_n\varphi\right\Vert\leq c \|\varphi\| N^{-qb}\ln N  .
$$
\end{lemma}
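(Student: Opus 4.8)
The plan is to reduce the estimate to a bound on the matrix $\mathbf{E}_N = \mathbf{K}_N - \widetilde{\mathbf{K}}_N$ and then to invoke Lemma \ref{all_norm0}. The key observation is that $\mathcal{P}_n\varphi$ lies in the finite-dimensional space $X_n$, on which $\mathcal{K}_n$ and $\widetilde{\mathcal{K}}_n$ act through the matrices $\mathbf{K}_N$ and $\widetilde{\mathbf{K}}_N$ with respect to the orthonormal basis $\{e_{\mathbf{k}}:\mathbf{k}\in\mathbb{Z}_{-n,n}^2\}$; hence, via Parseval's identity, the $L_2$-norm of $(\mathcal{K}_n-\widetilde{\mathcal{K}}_n)\mathcal{P}_n\varphi$ equals the Euclidean norm of $\mathbf{E}_N$ applied to the coefficient vector of $\mathcal{P}_n\varphi$.

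First I would set $\psi:=\mathcal{P}_n\varphi=\sum_{\mathbf{l}\in\mathbb{Z}_{-n,n}^2}\langle\varphi,e_{\mathbf{l}}\rangle e_{\mathbf{l}}$ and $\boldsymbol{\psi}_N:=[\langle\varphi,e_{\mathbf{l}}\rangle:\mathbf{l}\in\mathbb{Z}_{-n,n}^2]$. Using $\mathcal{K}_n=\mathcal{P}_n\mathcal{K}$ together with $K_{\mathbf{k},\mathbf{l}}=\langle\mathcal{K}e_{\mathbf{l}},e_{\mathbf{k}}\rangle$, and the definition of $\widetilde{\mathcal{K}}_n$ through the truncated kernel $\widetilde{K}_n$, a short computation using only the orthonormality relations $\langle e_{\mathbf{l}},e_{\mathbf{m}}\rangle=\delta_{\mathbf{l},\mathbf{m}}$ and the symmetry of $\mathbb{Z}_{-n,n}^2$ gives $(\mathcal{K}_n-\widetilde{\mathcal{K}}_n)\mathcal{P}_n\varphi=\sum_{\mathbf{k}\in\mathbb{Z}_{-n,n}^2}\bigl(\sum_{\mathbf{l}\in\mathbb{Z}_{-n,n}^2}(K_{\mathbf{k},\mathbf{l}}-\widetilde{K}_{\mathbf{k},\mathbf{l}})\langle\varphi,e_{\mathbf{l}}\rangle\bigr)e_{\mathbf{k}}$. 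Since $\{e_{\mathbf{k}}\}$ is orthonormal, Parseval's identity then yields $\|(\mathcal{K}_n-\widetilde{\mathcal{K}}_n)\mathcal{P}_n\varphi\|=\|\mathbf{E}_N\boldsymbol{\psi}_N\|_2$.

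Next I would apply the definition of the spectral norm to get $\|\mathbf{E}_N\boldsymbol{\psi}_N\|_2\le\|\mathbf{E}_N\|_2\|\boldsymbol{\psi}_N\|_2$, and note that $\|\boldsymbol{\psi}_N\|_2=\|\mathcal{P}_n\varphi\|\le\|\varphi\|$ because $\mathcal{P}_n$ is an orthogonal projection. Finally, Lemma \ref{all_norm0} (applied with the same $q$) furnishes a constant $c>0$, independent of $n$, such that $\|\mathbf{E}_N\|_2\le cN^{-qb}\ln N$ for every $n$ with $n>q\ln N$; chaining the three estimates gives $\|(\mathcal{K}_n-\widetilde{\mathcal{K}}_n)\mathcal{P}_n\varphi\|\le c\|\varphi\|N^{-qb}\ln N$, which is the assertion.

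The argument is essentially a short corollary of Lemma \ref{all_norm0}; the only step requiring some care is the identification of $(\mathcal{K}_n-\widetilde{\mathcal{K}}_n)$ on $X_n$ with multiplication by $\mathbf{E}_N$, that is, verifying that the Fourier-coefficient bookkeeping for $\widetilde{\mathcal{K}}_n\mathcal{P}_n$ matches that for $\mathcal{K}_n\mathcal{P}_n$ entry by entry, so that Parseval converts the $L_2$-norm exactly into $\|\mathbf{E}_N\boldsymbol{\psi}_N\|_2$. All of the analytic content — the exponential decay of the entries of $\mathbf{K}_N$ and the consequent $N^{-qb}\ln N$ bound on $\|\mathbf{E}_N\|_1$, $\|\mathbf{E}_N\|_\infty$ and $\|\mathbf{E}_N\|_2$ — has already been established in Corollary \ref{upperbnd_of_matrix_element} and Lemma \ref{all_norm0}.
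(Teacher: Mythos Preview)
Your proposal is correct and follows essentially the same route as the paper: identify $(\mathcal{K}_n-\widetilde{\mathcal{K}}_n)$ restricted to $X_n$ with the matrix $\mathbf{E}_N$ acting on Fourier coefficient vectors, bound by $\|\mathbf{E}_N\|_2\|\boldsymbol{\psi}_N\|_2\le\|\mathbf{E}_N\|_2\|\varphi\|$, and invoke Lemma~\ref{all_norm0}. The only cosmetic difference is that the paper reaches the bound via the duality formula $\|f\|=\sup_{\|\psi\|=1}|\langle f,\psi\rangle|$, writing $\langle(\mathcal{K}_n-\widetilde{\mathcal{K}}_n)\mathcal{P}_n\varphi,\psi\rangle=\boldsymbol{\psi}_n\mathbf{E}_N\boldsymbol{\varphi}_n^T$, whereas you apply Parseval directly; the two computations are equivalent here since the image lies in $X_n$.
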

\begin{proof}
For all $n\in\mathbb{N}$, and $\varphi\in  L_2(I^2_{2\pi})$, define $\boldsymbol{\varphi}_n$ as a row vector $[\hat{\varphi}_{\mathbf{k}}:\mathbf{k}\in\mathbb{Z}_{-n,n}^2]$. From the definition of $\mathbf{E}_N$, it can be verified that for all $n\in\mathbb{N}$, and $\varphi, \psi\in  L_2(I^2_{2\pi})$,
\begin{eqnarray*}
	\left <(\mathcal{K}_n-\widetilde{\mathcal{K}}_n)\mathcal{P}_n\varphi,\psi\right>&=&\left <(\mathcal{K}_n-\widetilde{\mathcal{K}}_n)\mathcal{P}_n\varphi,\mathcal{P}_n\psi\right>\\
	&=&\sum_{ \mathbf{k} \in\mathbb{Z}_{-n,n}^2}\sum_{ \mathbf{l} \in\mathbb{Z}_{-n,n}^2}\hat{\psi}_{\mathbf{k}}(K_{\mathbf{k},\mathbf{l}}-\widetilde{K}_{\mathbf{k},\mathbf{l}})\hat{\varphi}_{\mathbf{l}}\\
	&=& \boldsymbol{\psi}_n \mathbf{E}_N\boldsymbol{\varphi}_n^T.
\end{eqnarray*}
Note that for all $\varphi\in L_2(I^2_{2\pi})$, it must hold $\|\boldsymbol{\varphi}_n\|_2\leq \|\varphi\|$. Therefore, from the above equality, it follows that for all $n\in\mathbb{N}$, and $\varphi\in  L_2(I^2_{2\pi})$,
\begin{equation*}
\|(\mathcal{K}_n-\widetilde{\mathcal{K}}_n)\mathcal{P}_n\varphi\|=\sup_{\mathop{\psi\in L_2(I^2_{2\pi})}\limits_{\| \psi\|=1}}\left|\left<(\mathcal{K}_n-\widetilde{\mathcal{K}}_n)\mathcal{P}_n\varphi,\psi\right>\right|
\leq \|\varphi\|  \|\mathbf{E}_N\|_2.
\end{equation*}
This inequality, combining with Lemma \ref{all_norm0}, yields the desired result.
\end{proof}
	
We are now prepared to establish the stability and convergence analysis of the proposed fast Fourier-Galerkin method. The results of this analysis are presented in the following theorem.

\begin{theorem}
Let $p\in\mathbb{N}$ and assumptions \textup{\textbf{(A1)}}-\textup{\textbf{(A2)}} hold. If the operator $\mathcal{I}-\mathcal{K}$ is injective from $L_{2}(I_{2\pi}^2)$ to $L_{2}(I_{2\pi}^2)$, the right hand side $g$ of \eqref{orig_bie} is in $H_p(I_{2\pi}^2)$,  $0 < b < \min\left\{\frac{C_0^2}{\sqrt{2}M_0M_1},\frac{R_0}{2}\right\}$ and $q>0$ with $qb\geq p/2$ , then there exists a positive constant $c$ such that for sufficiently large $n$ and any $\varphi\in H_p(I_{2\pi}^2)$,
\begin{equation}\label{truncated_stability}
\|(\mathcal{I}-\widetilde{\mathcal{K}}_n)\mathcal{P}_n\varphi\|\geq c \|\varphi\|,
\end{equation}
and
\begin{equation}\label{truncated_error}
  \|\rho-\tilde{\rho}_n\|\leq  cN^{-p/2}\ln N \|\rho\|_{H_p},
\end{equation}
where $\widetilde{\mathcal{K}}_n$ and $\tilde{\rho}_n$ depend on $q$.
\end{theorem}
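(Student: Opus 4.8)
The plan is to regard the truncated equation \eqref{trunc_operator_eq} as a small perturbation of the classical Galerkin equation \eqref{Galerkin_operator_eq}: by Theorem \ref{Galerkin_error} the operators $\mathcal{I}-\mathcal{K}_n$ are uniformly invertible for large $n$, while Lemma \ref{truncated_operator_norm_error} shows that replacing $\mathcal{K}_n$ by $\widetilde{\mathcal{K}}_n$ perturbs the operator by at most $\mathcal{O}(N^{-qb}\ln N)$ in norm on $X_n$, a quantity tending to $0$ since $qb\ge p/2>0$. A Neumann-type argument then yields stability, and subtracting the two Galerkin equations converts the stability bound into the error estimate.

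For the stability inequality \eqref{truncated_stability}, I would fix $n^\ast$ and set $\kappa:=\sup_{n\ge n^\ast}\|(\mathcal{I}-\mathcal{K}_n)^{-1}\|<\infty$ from Theorem \ref{Galerkin_error}. A short observation is that $\mathcal{I}-\mathcal{K}_n$ maps $X_n$ into $X_n$ (because $\mathcal{K}_n=\mathcal{P}_n\mathcal{K}$), and that if $(\mathcal{I}-\mathcal{K}_n)\psi=\eta$ with $\eta\in X_n$ then $\psi=\eta+\mathcal{P}_n\mathcal{K}\psi\in X_n$; hence $(\mathcal{I}-\mathcal{K}_n)$ restricted to $X_n$ is a bijection of $X_n$ with inverse of norm at most $\kappa$, so $\|(\mathcal{I}-\mathcal{K}_n)\psi\|\ge\kappa^{-1}\|\psi\|$ for every $\psi\in X_n$. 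Since $\widetilde{\mathcal{K}}_n$ depends only on $\mathcal{P}_n\varphi$, Lemma \ref{truncated_operator_norm_error} gives $\|(\mathcal{K}_n-\widetilde{\mathcal{K}}_n)\psi\|\le c_1N^{-qb}\ln N\,\|\psi\|$ on $X_n$, whence by the triangle inequality
\[
\|(\mathcal{I}-\widetilde{\mathcal{K}}_n)\psi\|\ge\bigl(\kappa^{-1}-c_1N^{-qb}\ln N\bigr)\|\psi\|,\qquad \psi\in X_n.
\]
For $n$ large enough the bracket exceeds $\tfrac12\kappa^{-1}$, which, on taking $\psi=\mathcal{P}_n\varphi$, is \eqref{truncated_stability} with $c=\tfrac12\kappa^{-1}$; in particular $(\mathcal{I}-\widetilde{\mathcal{K}}_n)|_{X_n}$ is injective and therefore invertible on the finite-dimensional space $X_n$, giving the unique solvability of \eqref{trunc_operator_eq} and hence of \eqref{truncated_matrix_equation}.

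For the convergence estimate \eqref{truncated_error}, I would subtract \eqref{trunc_operator_eq} from \eqref{Galerkin_operator_eq} and rearrange to obtain $(\mathcal{I}-\widetilde{\mathcal{K}}_n)(\rho_n-\tilde\rho_n)=(\mathcal{K}_n-\widetilde{\mathcal{K}}_n)\rho_n$, where $\rho_n$ is the untruncated Galerkin solution. Applying the stability bound to $\rho_n-\tilde\rho_n\in X_n$ and then Lemma \ref{truncated_operator_norm_error} with $\mathcal{P}_n\rho_n=\rho_n$ yields $\|\rho_n-\tilde\rho_n\|\le 2\kappa c_1N^{-qb}\ln N\,\|\rho_n\|$; moreover $\|\rho_n\|=\|(\mathcal{I}-\mathcal{K}_n)^{-1}\mathcal{P}_n g\|\le\kappa\|g\|\le\kappa\|\mathcal{I}-\mathcal{K}\|\,\|\rho\|_{H_p}$, so $\|\rho_n-\tilde\rho_n\|\le c_2N^{-qb}\ln N\,\|\rho\|_{H_p}$. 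Combining this with the triangle inequality $\|\rho-\tilde\rho_n\|\le\|\rho-\rho_n\|+\|\rho_n-\tilde\rho_n\|$, the Galerkin estimate $\|\rho-\rho_n\|\le c_3 n^{-p}\|\rho\|_{H_p}$ of Theorem \ref{Galerkin_error}, the elementary relations $n\ge\tfrac14\sqrt N$ (so $n^{-p}\le 4^pN^{-p/2}$) and $N^{-qb}\le N^{-p/2}$ (from $qb\ge p/2$), together with $\ln N\ge1$, produces \eqref{truncated_error}.

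The genuinely hard analysis — the exponential decay \eqref{upper_bnd_element} of the matrix entries $K_{\mathbf{k},\mathbf{l}}$ and hence the bound $\|\mathbf{E}_N\|_2\le cN^{-qb}\ln N$ of Lemma \ref{all_norm0} behind Lemma \ref{truncated_operator_norm_error} — is already in place, so the remaining work is a routine perturbation argument. The two points deserving attention are: (i) transferring the uniform invertibility of $\mathcal{I}-\mathcal{K}_n$ on $L_2(I_{2\pi}^2)$ to a lower bound valid on the subspace $X_n$, which relies on the stability of $X_n$ under $(\mathcal{I}-\mathcal{K}_n)^{-1}$; and (ii) the bookkeeping between the two scales $n$ and $N=(2n+1)^2$, so that the $\mathcal{O}(n^{-p})$ projection error and the $\mathcal{O}(N^{-qb}\ln N)$ truncation error merge into $\mathcal{O}(N^{-p/2}\ln N)$ — and here the hypothesis $qb\ge p/2$ is precisely what prevents the truncation error from dominating.
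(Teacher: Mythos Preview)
Your argument is correct and follows essentially the same perturbation strategy as the paper: use the uniform invertibility of $\mathcal{I}-\mathcal{K}_n$ from Theorem~\ref{Galerkin_error} together with the smallness of $\mathcal{K}_n-\widetilde{\mathcal{K}}_n$ from Lemma~\ref{truncated_operator_norm_error} to obtain stability, then deduce the error bound. The one cosmetic difference is in the decomposition for \eqref{truncated_error}: the paper splits $\rho-\tilde\rho_n=(\rho-\mathcal{P}_n\rho)+(\mathcal{P}_n\rho-\tilde\rho_n)$ and uses the identity $(\mathcal{I}-\widetilde{\mathcal{K}}_n)(\mathcal{P}_n\rho-\tilde\rho_n)=\mathcal{P}_n(\mathcal{I}-\mathcal{K})(\mathcal{P}_n\rho-\rho)+(\mathcal{K}_n-\widetilde{\mathcal{K}}_n)\mathcal{P}_n\rho$, whereas you split $\rho-\tilde\rho_n=(\rho-\rho_n)+(\rho_n-\tilde\rho_n)$ and invoke Theorem~\ref{Galerkin_error} directly for the first piece; both routes are standard and lead to the same bound.
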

	
\begin{proof}
According to Theorem \ref{Galerkin_error}, the operator $(\mathcal{I}-\mathcal{K}_n)^{-1}$ exists and is bounded for sufficiently large $n$. Thus, by noting that
$$
\| (\mathcal{I}-\widetilde{\mathcal{K}}_n)\mathcal{P}_n\varphi \|\geq\| (\mathcal{I}-\mathcal{K}_n)\mathcal{P}_n\varphi\|-\|(\mathcal{K}_n-\widetilde{\mathcal{K}}_n)\mathcal{P}_n\varphi\|,
$$
we obtain inequality \eqref{truncated_stability} with Lemma \ref{truncated_operator_norm_error}.

Inequality \eqref{truncated_stability} implies there is a positive constant $c_0$ that for sufficiently large $n$,
\begin{equation}\label{last_thm_eq2}
\|\rho-\tilde{\rho}_n\|\leq \|\rho-\mathcal{P}_n\rho\|+ c_0\|(\mathcal{I}-\widetilde{\mathcal{K}}_n)(\mathcal{P}_n\rho-\tilde{\rho}_n)\|.
\end{equation}
Since $\mathcal{P}_n(\mathcal{I}-\mathcal{K})\rho=(\mathcal{I}-\widetilde{\mathcal{K}}_n)\tilde{\rho}_n$,
we can express
\begin{equation}\label{last_thm_eq3}
(\mathcal{I}-\widetilde{\mathcal{K}}_n)(\mathcal{P}_n\rho-\tilde{\rho}_n)=\mathcal{P}_n(\mathcal{I}-\mathcal{K})(\mathcal{P}_n\rho-\rho)+(\mathcal{K}_n-\widetilde{\mathcal{K}}_n)\mathcal{P}_n\rho.
\end{equation}
Note that for all $q>0$, it holds that $n> q\ln N$ for sufficiently large $n$. Additionally, since $g$ is in $H_p(I_{2\pi}^2)$, $\rho$ is also in $H_p(I_{2\pi}^2)$, as noted in Section 9.1.4 of \cite{atkinson1996numerical}. Thus, by substituting \eqref{last_thm_eq3} into \eqref{last_thm_eq2} along with inequalities \eqref{sobolev_proj_error_upperbnd}, and utilizing Lemma \ref{truncated_operator_norm_error} with the inequality $qb\geq p/2$, we derive our desired inequality \eqref{truncated_error}.
\end{proof}
\section{Numerical experiments}
\label{sec:5}
In this section, we present two numerical examples to confirm the theoretical order of approximation and computational complexity of
the proposed method. The computer program is run on a personal computer running Windows 10, with a 3.00 GHz Intel(R) Core(TM) i5-9500 CPU, and 16 GB RAM.
	
Recall that $\rho_n$ and $\tilde{\rho}_n$ are the solutions to the Fourier-Galerkin and fast Fourier-Galerkin methods, as described in equations \eqref{Galerkin_operator_eq} and \eqref{trunc_operator_eq}, respectively. Due to the lack of an analytic solution for equation \eqref{orig_bie}, we estimate the errors $e_n:=\frac{\Vert\rho-\rho_n\Vert}{\Vert\rho\Vert}$ and $\tilde{e}_n:=\frac{\Vert\rho-\tilde{\rho}_n\Vert}{\Vert\rho\Vert}$  using $\rho_{60}$ as a proxy for $\rho$. In this context, the matrix $\mathbf{K}_N$ scales up to $N=14641$.

The term ``C.O." stands for the convergence order of the methods, defined by $\log_{\chi(n)}\frac{e_n}{e_{n-10}}$ or $\log_{\chi(n)}\frac{\tilde{e}_n}{\tilde{e}_{n-10}}$, where $\chi(n) := \frac{n-10}{n}$. The condition numbers of the matrices $\mathbf{I}_N+\tilde{\mathbf{K}}_N$ and $\mathbf{I}_N+\mathbf{K}_N$ are denoted as ``Cond".	Additionally, the compression ratio ``C.R." of the truncated coefficient matrix $\tilde{\mathbf{K}}_N$ is calculated as $\mathrm{C.R.} := \frac{\mathcal{N}(\tilde{\mathbf{K}}_N)}{\mathcal{N}(\mathbf{K}_N)}$, with $\mathcal{N}(\mathbf{K}_N)$ and $\mathcal{N}(\tilde{\mathbf{K}}_N)$ indicating the number of nonzero entries in $\mathbf{K}_N$ and $\tilde{\mathbf{K}}_N$, respectively.

\begin{figure}[h]
\centering
	{\includegraphics[height=4.4cm,width=5.2cm]{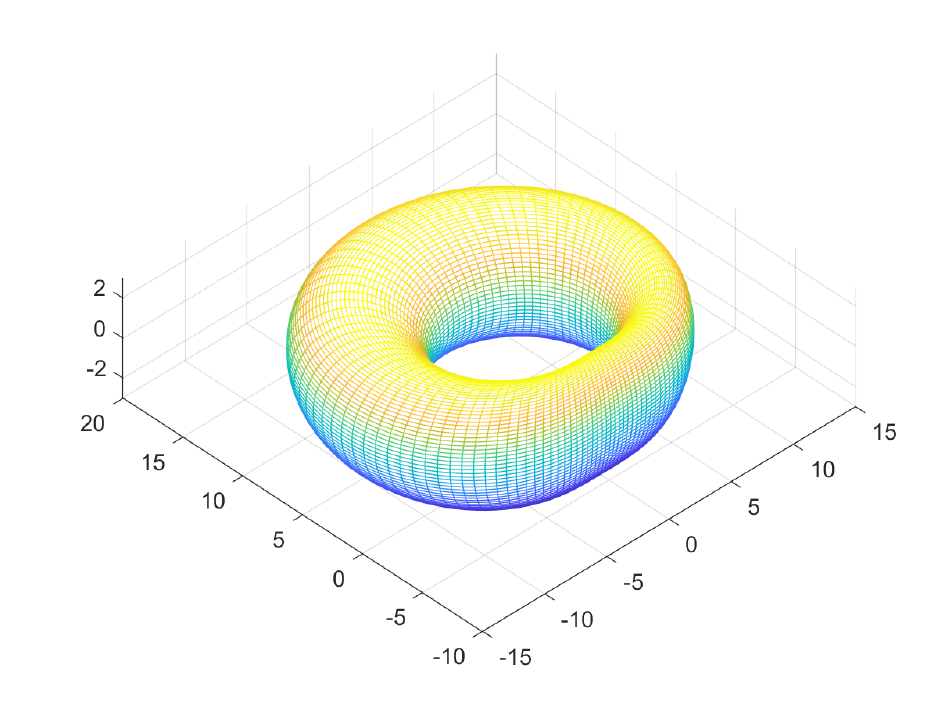}}
	\hspace{0.4in}
	{\includegraphics[height=4.4cm,width=5.2cm]{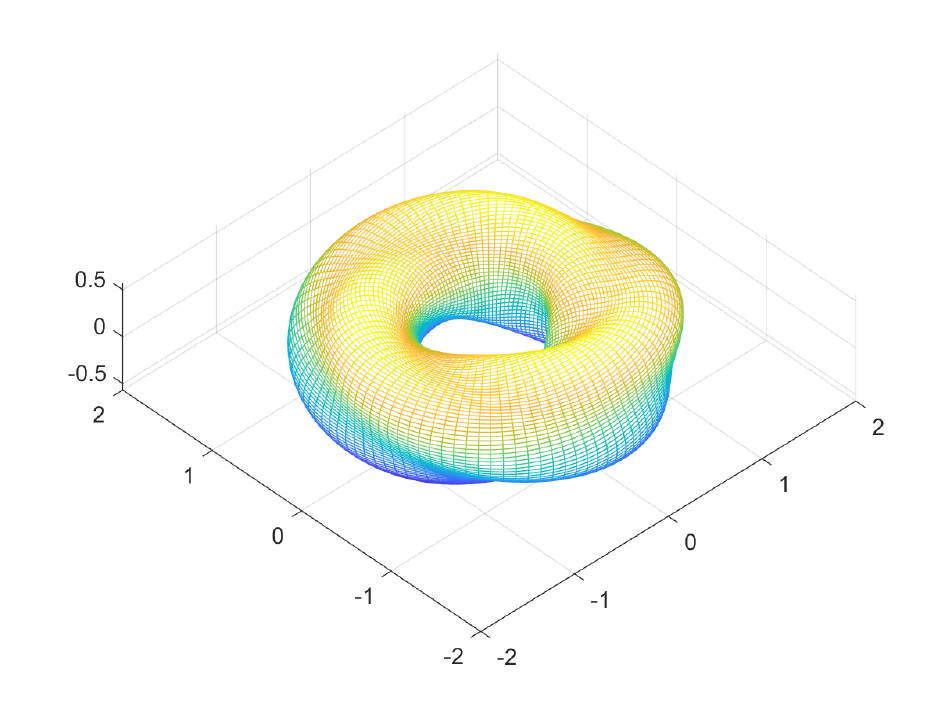}}
	\caption{The bagel-shaped surface (left) and the cruller surface (right)}
	\label{fig: examples}
\end{figure}

\noindent\textbf{Example 1}. We consider solving the boundary integral equation \eqref{bie1} on the bagel-shaped surface $\partial D$, depicted on the left side of Figure \ref{fig: examples} and represented by the parametric equation
\begin{equation*}
\Gamma(\boldsymbol{\theta})=[(5+3\cos\theta_0)(1+0.5\sin\theta_1)\cos\theta_1,(5+3\cos\theta_0)(1+0.5\sin\theta_1)\sin \theta_1,3\sin\theta_0)]^T,
\end{equation*}
where $\boldsymbol{\theta}:=[\theta_0,\theta_1]\in I_{2\pi}^2$.

The right hand side function of \eqref{bie1} is defined as $h(\mathbf{x})=2|\mathbf{x}-\mathbf{x}_0|^2\ln|\mathbf{x}-\mathbf{x}_0|$, where $\mathbf{x}_0= \Gamma(\pi,\pi)\in\partial D$ and $\mathbf{x}\in\partial D$.
Obviously, each component of this parameter equation is an infinity differentiable and $2\pi$-biperiodic. It can be checked that $h\in H_{3-\epsilon}(\partial D)$, where $\epsilon>0$ is
an arbitrary number, implying $g\in H_{3-\epsilon}(I_{2\pi}^2)$. Moreover, based on Section 9.1.4 of \cite{atkinson1996numerical}, $\rho$ belongs to $H_{3-\epsilon}(I_{2\pi}^2)$, ensuring a theoretical convergence order of $3-\epsilon$. In this experiment, we select $\delta=1$ and $q=2.3$.
	
We present in Table \ref{example 1} the relative errors, convergence orders, and condition numbers for both the Fourier-Galerkin method (referred to as `non-truncated') and the fast Fourier-Galerkin method  (referred to as `truncated').
The relative errors $e_n$ of the Fourier-Galerkin method and $\tilde{e}_n$ of the fast Fourier-Galerkin method for Example 1 are listed in the second and the fifth columns, respectively. The last column shows the ``C.R." values of the coefficient matrix for the proposed fast Fourier-Galerkin method. It can be seen that the ``C.R." values of the truncated matrix $\tilde{\mathbf{K}}_N$ decreases as $n$ increases. Furthermore, despite the coefficient matrix $\tilde{\mathbf{K}}_N$  exhibits a remarkably high level of sparsity, $\tilde{e}_n$ remains comparable to $e_n$. Additionally, the condition numbers, listed in the fourth and seventh columns, are consistent at 3.9851, affirming that the fast Fourier-Galerkin method is stable as the Fourier-Galerkin method. The column titled ``C.O." verifies our theoretical convergence order.

\begin{table}[htbp]
	\setlength{\parskip}{0.2cm plus4mm minus3mm}
	\caption{Numerical results for Example 1: Solving the boundary integral equation \eqref{bie1} on the bagel-shaped surface}
	\centering
	\begin{tabular}{c|ccc|cccc}
    	\hline
	    \multicolumn{1}{c|}{\multirow{2}{*}{n}} &
		\multicolumn{3}{c|}{non-truncated} & \multicolumn{4}{c}{truncated} \\ \cline{2-8}
			& $e_n$ & C.O. &  Cond  & $\tilde{e}_n$ & C.O. &Cond &  C.R.
		\\ \hline
			15 & 8.8193e-06 & -     & 3.9851 & 1.0644e-05  & -    & 3.9851 & 0.3443\\
			25 & 3.1172e-06 & 2.04  & 3.9851 & 3.2692e-06  & 2.31 & 3.9851 & 0.2026\\
			35 & 1.4445e-06 & 2.29  & 3.9851 & 1.5206e-06  & 2.27 & 3.9851 & 0.1243\\
			45 & 7.0882e-07 & 2.83  & 3.9851 & 7.6028e-07  & 2.76 & 3.9851 & 0.0868\\
		\hline
	\end{tabular}
	\label{example 1}
\end{table}
	
\noindent\textbf{Example 2}. In this example we consider solving the boundary integral equation \eqref{bie1} on the cruller surface $\partial D$, which is shown on the right side of Figure \ref{fig: examples}. The surface is defined by the parametric equation
\begin{equation*}
\Gamma(\boldsymbol{\theta})=\left[(1+\omega(\boldsymbol{\theta})\cos\theta_0)\cos\theta_1,(1+\omega(\boldsymbol{\theta})\cos\theta_0)\sin\theta_1,\omega(\boldsymbol{\theta})\sin\theta_0\right]^T,
\end{equation*}
where $\omega(\boldsymbol{\theta}):=1/2+0.065\cos(3\theta_0+3\theta_1)$,
and $\boldsymbol{\theta}=[\theta_0,\theta_1]\in I^2_{2\pi}$.

It is evident that each component of function $\Gamma$ is infinitely differentiable, and $2\pi$-biperiodic. We define the right hand side function $h(\mathbf{x})=(x_0-\pi)^2\ln|x_0-\pi|$, where $\mathbf{x}_0= \Gamma(\pi,\pi)$ and $\mathbf{x}:=[x_0,x_1,x_2]^T\in\partial D$. Noting that $h\in H_{3-\epsilon}(\partial D)$, with $\epsilon>0$, we infer from Section 9.1.4 of \cite{atkinson1996numerical} that $\rho\in H_{3-\epsilon}(I^2_{2\pi})$.  This establishes a theoretical convergence order of $3 - \epsilon$. For our computations, we set $\delta = 1$ and $q = 2.3$.

In Table \ref{example 2}, we present the relative errors, denoted as $e_n$ and $\tilde{e}_n$, obtained from the Fourier-Galerkin method and the fast Fourier-Galerkin method for Example 2, respectively. These errors are displayed in the second and fifth columns. Additionally, the column labeled ``C.O." lists the convergence order.

As shown in the fourth and seventh columns of Table \ref{example 2}, the condition numbers of the matrices $\mathbf{I}_N - \mathbf{K}_N$ and $\mathbf{I}_N - \tilde{\mathbf{K}}_N$ consistently approach approximately $3.6919$. The last column of Table \ref{example 2} highlights the sparsity of the coefficient matrix $\tilde{\mathbf{K}}_N$ in our proposed method, as indicated by the ``C.R." values.  Furthermore, when $\tilde{\mathbf{K}}_N$ is employed as the coefficient matrix, the relative error of the solutions remains comparable to those obtained using the non-truncated matrix $\mathbf{K}_N$.
	
These two examples illustrate that by employing the proposed fast Fourier-Galerkin method, we significantly increase the sparsity of the coefficient matrix. Additionally, the solutions obtained from the fast Fourier-Galerkin method \eqref{trunc_operator_eq} closely resemble those obtained using the original Fourier-Galerkin method  \eqref{Galerkin_operator_eq} in terms of error and convergence order. Furthermore, the condition numbers of the coefficient matrices remain comparable before and after truncation.


 \begin{table}[h]
		\setlength{\parskip}{0.2cm plus4mm minus3mm}
		\caption{Numerical results for Example 2: Solving the boundary integral equation \eqref{bie1} on the cruller surface}
		\centering
		\begin{tabular}{c|ccc|cccc}
			\hline
			\multicolumn{1}{c|}{\multirow{2}{*}{$n$}}
			&
			\multicolumn{3}{c|}{non-truncated} & \multicolumn{4}{c}{truncated} \\ \cline{2-8}
			& $e_n$ & C.O. &  Cond  & $\tilde{e}_n$ & C.O. &Cond &  C.R.
			\\ \hline
			15 & 2.1270e-03  & -     & 3.6919  & 2.7481e-03  &  -   & 3.6923 & 0.3443\\
			25 & 7.2865e-04  & 2.10  & 3.6919  & 7.9016e-04  & 2.44 & 3.6920 & 0.2026\\
			35 & 3.4019e-04  & 2.26  & 3.6919  & 4.0777e-04  & 1.97 & 3.6919 & 0.1243\\
			45 & 1.7482e-04  & 2.65  & 3.6919  &  2.3748e-04 & 2.15 & 3.6919 & 0.0868\\
			\hline
		\end{tabular}
		\label{example 2}
	\end{table}
\section{Conclusions}
\label{sec:6}
We introduce a fast Fourier-Galerkin method for solving boundary integral equations arising from the Dirichlet problem for Laplace's equation in domains with non-axisymmetric toroidal surfaces. Our comprehensive analysis of the kernel function helps us understand the decay patterns of its Fourier coefficients, and formulate a high-performance truncation strategy. Through theoretical analysis and numerical experiments, we demonstrate that this truncation strategy maintains stability and does not ruin the convergence order. Looking ahead, we plan to develop a quadrature rule for calculating the entries in the truncated matrix $\tilde{\mathbf{K}}_N$ and to extend our approach to fast algorithms for solving boundary integral equations on other geometric shapes, such as surfaces diffeomorphic to a sphere.

\vspace{1cm}

\noindent{\small\textbf{Funding} This work is partially supported by the National Key Research and Development Program of China (2023YFB3001704),
Key-Area Research and Development Program of Guangdong Province (2021B0101190003), and the Natural Science Foundation of Guangdong
Province, China (2022A1515010831).}\\

\noindent{\small \textbf{Data Availability} The datasets generated during the current study are available from the corresponding author
upon reasonable request.}

\section*{Declarations}
\noindent{\small\textbf{Competing Interests} The authors declare that they have no conflict of interest.}


%
%



\end{document}